\newtheorem{theorem}{Theorem}
\newtheorem{corollary}[theorem]{Corollary}
\theoremstyle{definition}
\newtheorem{example}[theorem]{Example}
\newtheorem*{rem}{Remark} %% UN-NUMBERED
\newtheorem{remark}[theorem]{Remark}
\newcommand{\C}{\mathbb{C}}
\newcommand{\R}{\mathbb{R}}
\newcommand{\D}{\mathbb{D}}
\newcommand{\N}{\mathbb{N}}
\newcommand{\dd}{\, d}
\newcommand{\ip}[2]{\left\langle {#1 ,#2} \right\rangle}
\newcommand{\norm}[1]{\left \lVert#1 \right \rVert}
\DeclareMathOperator{\dist}{dist}
\title[Weighted Bergman Kernel Functions]{Weighted Bergman Kernel Functions\\
Associated to Meromorphic Functions}
\author{Robert Jacobson}
\date{\today}
\address{Roger Williams University\\ One Old Ferry Road\\ Bristol, RI 02809}
\email{rljacobson@member.ams.org}
\thanks{This work was partially supported by a grant from The Foundation To
Promote Scholarship \& Teaching at Roger Williams University.}
\keywords{Bergman kernel function, Bergman space}
\subjclass[2010]{Primary 32A25; Secondary 32A36}
\begin{document}
\begin{abstract}
We present a technique for computing explicit, concrete formulas for the
weighted Bergman kernel on a planar domain with weight the modulus squared of a
meromorphic function in the case that the meromorphic function has a finite
number of zeros on the domain and a concrete formula for the unweighted kernel
is known. We apply this theory to the study of the Lu Qi-keng Problem.
\end{abstract}
\maketitle

\section{Introduction}
The Bergman kernel function has been called a cornerstone of geometric
function theory~\cite{krantz_new_2006} and is an object of considerable study
in complex analysis. The problems of computing explicit formulas for this function and
determining its zero set are classical problems in complex analysis. Domains for
which the associated Bergman kernel is zero-free are called Lu {Qi-keng}
domains, and the problem of determining which domains are Lu {Qi-keng} is known
as the Lu {Qi-keng} Problem. This problem is of interest in the study of Bergman
representative coordinates which require the kernel to be zero-free
(see~\cite{jarnicki_invariant_1993, jarnicki_invariant_2005}). The Lu {Qi-keng}
Problem for smooth planar domains has been solved~\cite{suita_lu_1976}, but a
solution for higher dimensions is not yet known~\cite{boas_lu_2000}. The
property of having a zero-free kernel is also a biholomorphic invariant and
hence may be used to distinguish biholomorphic equivalence classes.

The main result of this paper is Theorem~\ref{decomp:ZeroReducingTheorem}, which
allows one to write certain weighted Bergman kernels on the plane in terms of
other weighted Bergman kernels with simpler weights. One consequence of this
theorem is that if one has an explicit, concrete formula for an unweighted
kernel, then one can compute an explicit, concrete formula for the weighted
kernel whenever the weight is the modulus squared of a meromorphic function with
finitely many zeros on the associated domain. By the well-known technique of
Theorem~\ref{prelim:Forelli-Rudin}, weighted kernels for domains on the plane
are related to unweighted kernels for domains in $\C^2$. Thus, the results
presented here that are specific to complex dimension $1$ have relevance to the
classical problems of the first paragraph, in particular the Lu {Qi-keng}
Problem, in complex dimension $2$. With the $2$-dimensional Lu {Qi-keng} Problem
in mind, we study the zero sets of weighted kernels in Section~\ref{sect:zeros}.

The Bergman kernel for a domain $\Omega\subset\C$ is
the unique skew-symmetric sesqui-holomorphic\footnote{Sesqui-holomorphic means
holomorphic in the first variable and conjugate holomorphic in the second
variable.} function $K^\Omega\colon \Omega\times \Omega \to \C$ with the
reproducing property
\begin{align}
f(z)=\ip{f}{K^\Omega(\cdot, z)} = \int_\Omega f(w) K^\Omega(z,w) \dd V_w \;\;\;
\mbox{ for all } f \in A^2(\Omega),
\label{intro:reprokernel}
\end{align}
where $dV_w$ is the real $2n$-dimensional Lebesgue volume (or area) measure,
and $A^2(\Omega)$ is the Hilbert space of square-integrable holomorphic
functions on $\Omega$, called the Bergman space. (When the domain is clear, we
will omit it from the superscript of $K$.) Equivalently, if $\{\phi_j
\}_{j=0}^\infty$ is an orthonormal Hilbert space basis for $A^2(\Omega)$, then the Bergman kernel
function $K^\Omega(z,w)$ is given by
\begin{equation}
K^\Omega(z, w):=\sum_{j=0}^\infty \phi_j(z) \overline{\phi_j(w)}. \label{intro:bkdef}
\end{equation}

Also of present interest is the weighted Bergman kernel with respect to a
nonnegative weight function $\varphi$, which we denote $K^\Omega_\varphi(z,w)$.
Replacing the inner product in~(\ref{intro:reprokernel}) with the weighted inner
product
\[
\ip{f}{g}_\varphi := \int_\Omega f(w) \overline{g(w)} \varphi(w)\dd V_w,
\]
the kernel $K^\Omega_\varphi(z,w)$ is the unique reproducing kernel for the
weighted Bergman space $A^2_\varphi(\Omega)=\{f \mid \ip{f}{f}_\varphi<\infty$
and $f$ holomorphic$\}$.

The details of this classical theory can be found in many texts on complex
analysis, for example in~\cite{bell_cauchy_1992,krantz_function_2001}.

\section{Preliminary theory}\label{prelim:section}

To study the Lu {Qi-keng} Problem in higher dimensions, we would like concrete
examples of kernels on domains in $n$-dimensional complex space, but obtaining a
closed-form formula for the kernel from \eqref{intro:bkdef} is possible only for
domains with a high degree of symmetry. There are, however, several techniques
for relating the kernel of one domain to the kernel of another domain of
different complex dimension (see~\cite{boas_bergman_1999}). We shall make
crucial use of the following known result.

\begin{theorem}\label{prelim:Forelli-Rudin}
Let $D$ be a bounded domain in $\C$, let $\varphi(z) \colon D \to [0, \infty]$
be a weight function on $D$, and let $\Omega$ be defined
by $\Omega:=\{(z, w)\in \C^2 \mid z \in D, |w| < \varphi(z)\} \subset
\C^2$. Then $K_{\pi\varphi^2}(z, w) \equiv K(z, 0, w, 0)$.
\end{theorem}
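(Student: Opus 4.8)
The plan is to exhibit an explicit isometric isomorphism between the weighted Bergman space $A^2_{\pi\varphi^2}(D)$ and a distinguished subspace of the unweighted Bergman space $A^2(\Omega)$, and then to transfer the reproducing property through this isomorphism. Concretely, a holomorphic function $F(z,w)$ on the Hartogs domain $\Omega = \{(z,w) : z\in D,\ |w|<\varphi(z)\}$ admits a fibrewise power series expansion $F(z,w)=\sum_{k=0}^\infty f_k(z)w^k$, and the monomials $w^k$ are mutually orthogonal on each fibre. Integrating in $w$ first over the disc $|w|<\varphi(z)$ and then in $z$ over $D$, one computes that $\norm{F}^2_{A^2(\Omega)}=\sum_{k=0}^\infty \frac{\pi}{k+1}\int_D |f_k(z)|^2 \varphi(z)^{2k+2}\dd V_z$. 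In particular the $k=0$ term is exactly $\norm{f_0}^2_{A^2_{\pi\varphi^2}(D)}$, so the map $f\mapsto F(z,w):=f(z)$ embeds $A^2_{\pi\varphi^2}(D)$ isometrically into $A^2(\Omega)$ as the subspace of functions independent of $w$.

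**Transferring the reproducing property.**
Next I would use this decomposition to evaluate $K(z,0,w,0)$. Writing out \eqref{intro:bkdef} with an orthonormal basis of $A^2(\Omega)$ adapted to the grading by powers of $w$ — i.e. a union over $k$ of orthonormal bases of the $k$-th graded piece — every basis element with $k\ge 1$ vanishes when its $w$-variable is set to $0$. Hence $K^\Omega(z,0,w,0)$ only sees the $k=0$ piece, which by the isometry above is (a scalar multiple of) an orthonormal basis of $A^2_{\pi\varphi^2}(D)$; summing the resulting series gives $K_{\pi\varphi^2}(z,w)$. Alternatively, and perhaps more cleanly, I would verify directly that the function $(z,w)\mapsto K_{\pi\varphi^2}(z,w)$, viewed as an element of $A^2(\Omega)$ constant in the last variable, reproduces $F(\cdot,0)$ against the full $A^2(\Omega)$ inner product: by the fibre integration the pairing $\ip{F}{K_{\pi\varphi^2}}_{A^2(\Omega)}$ collapses to the $\varphi^2$-weighted pairing on $D$ of $f_0$ with $K_{\pi\varphi^2}(\cdot,z)$, which returns $f_0(z)=F(z,0)$. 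Since the Bergman kernel of $\Omega$ is the unique reproducing kernel and since setting the second pair of variables to $0$ picks out precisely the functional $F\mapsto F(z,0)$, uniqueness forces $K(z,0,w,0)=K_{\pi\varphi^2}(z,w)$.

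**The main obstacle.**
The routine part is the Fubini computation of the fibre integrals; the genuine point requiring care is the interchange of summation and integration and the claim that the fibrewise power series converges in the $A^2(\Omega)$ norm, so that the graded pieces really do give an orthogonal decomposition of the whole space rather than merely a dense subspace missing some completion issue. This is where boundedness of $D$ (hence finiteness of the relevant volume integrals when $\varphi$ is, say, bounded, or at least a Fatou/monotone-convergence argument when $\varphi$ takes the value $\infty$ on a small set) is used, together with the standard fact that on a complete Reinhardt-type fibration the monomial system is a complete orthogonal system. I would also need to confirm that $A^2_{\pi\varphi^2}(D)$ is nontrivial and that its kernel, reinterpreted on $\Omega$, genuinely lies in $A^2(\Omega)$ — again immediate from the $k=0$ term of the norm identity. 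Once these convergence and completeness points are pinned down, matching the two reproducing kernels via uniqueness is immediate.
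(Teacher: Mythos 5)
Your argument is correct, and it is the standard proof of the Forelli--Rudin construction; the paper itself offers no proof of this theorem, quoting it as essentially Corollary~2.1 of Ligocka's paper, so there is nothing internal to compare against. The one point you flag as delicate --- that the graded pieces $f_k(z)w^k$ give an orthogonal decomposition of all of $A^2(\Omega)$ and not just of a dense subspace --- is handled by the usual rotation-averaging trick: since $\Omega$ is invariant under $w\mapsto e^{i\theta}w$, the projection onto the $k$-th piece is $\frac{1}{2\pi}\int_0^{2\pi}F(z,e^{i\theta}w)e^{-ik\theta}\,d\theta$, which is norm-nonincreasing, so the partial sums of the fibrewise expansion converge to $F$ in $A^2(\Omega)$ and the Fubini computation is justified. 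With that in place, your second (``cleaner'') route is the most economical: the function $(\zeta,\omega)\mapsto K_{\pi\varphi^2}(\zeta,z)$ lies in $A^2(\Omega)$ by the $k=0$ isometry and represents the evaluation functional $F\mapsto F(z,0)$, so uniqueness of the reproducing element gives $K^\Omega(\zeta,\omega,z,0)=K_{\pi\varphi^2}(\zeta,z)$, and setting $\omega=0$ yields the claim.
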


The idea behind this theorem appears in the literature in various forms.
Theorem~\ref{prelim:Forelli-Rudin} is essentially Corollary~2.1
of~\cite{ligocka_forelli-rudin_1989} which Ligocka, generalizing an idea found
in a proof due to Forelli and Rudin in~\cite{forelli_projections_1974}, calls
the Forelli{\textendash}Rudin construction. The term Forelli{\textendash}Rudin
construction appears elsewhere in subsequent literature in reference to similar
techniques. Such techniques are surveyed in~\cite{boas_bergman_1999}.

Our primary goal is to express a weighted kernel in terms of another weighted
kernel that is in some sense simpler than the first. The following theorem is
the simplest case of such a theorem and is fundamental to the rest of the
theory.

\begin{theorem}\label{prelim:gTheorem}
Let $\Omega \subset \C^n$, let $K_\varphi(z, w)$ be the weighted Bergman kernel
on $\Omega$ with respect to a weight function $\varphi$, and let $g$ be
meromorphic on $\Omega$. Suppose that, after possibly removing singularities,
$\frac{K_\varphi(z, w)}{g(z)}$ is holomorphic in $z$. Then $K_{\varphi \cdot
|g|^2}(z, w) = \frac{K_\varphi(z, w)}{g(z) \overline{g(w)}}$.
\end{theorem}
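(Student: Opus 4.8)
The plan is to check directly that
\[
\widetilde K(z,w):=\frac{K_\varphi(z,w)}{g(z)\,\overline{g(w)}}
\]
is \emph{the} reproducing kernel for the weighted Bergman space $A^2_{\varphi\cdot|g|^2}(\Omega)$ and then to invoke uniqueness of reproducing kernels. One may assume $g\not\equiv 0$ --- otherwise the statement is vacuous --- so that $\{g\neq 0\}$ is a nonempty open subset of $\Omega$, the complement of a proper analytic subvariety.

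The first and, I expect, only delicate step is to make sense of $\widetilde K$ as a sesqui-holomorphic function on all of $\Omega\times\Omega$. The hypothesis is stated only in the first variable: for each fixed $w$ the quotient $z\mapsto K_\varphi(z,w)/g(z)$ has only removable singularities, and since $K_\varphi(z,w)$ is conjugate-holomorphic in $w$ while $g(z)$ does not involve $w$, this quotient is conjugate-holomorphic in $w$ as well. To divide further by $\overline{g(w)}$ I would use the conjugate symmetry $K_\varphi(z,w)=\overline{K_\varphi(w,z)}$, which turns $\overline{\widetilde K(z,w)}$ into $\frac{1}{\overline{g(z)}}\cdot\frac{K_\varphi(w,z)}{g(w)}$; the hypothesis, applied now with $w$ in the first slot, says $K_\varphi(w,z)/g(w)$ has removable singularities in $w$. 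Hence $\widetilde K$ extends to a function that is holomorphic in $z$ and conjugate-holomorphic in $w$ throughout $\Omega\times\Omega$ --- in particular the apparent $0/0$ behavior at a zero of $g$ is only apparent --- and, directly from the symmetry of $K_\varphi$, one also gets $\widetilde K(z,w)=\overline{\widetilde K(w,z)}$.

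The rest is routine. For $f\in A^2_{\varphi\cdot|g|^2}(\Omega)$ the product $gf$ is meromorphic with $\int_\Omega|gf|^2\varphi\dd V=\|f\|^2_{\varphi\cdot|g|^2}<\infty$, hence holomorphic (a meromorphic function with a pole is not locally square-integrable), so $gf\in A^2_\varphi(\Omega)$. Feeding $gf$ into the reproducing identity for $K_\varphi$, dividing through by $g(z)$, and using $g(w)=|g(w)|^2/\overline{g(w)}$ yields, for every $z$ with $g(z)\neq 0$,
\[
f(z)=\frac{1}{g(z)}\int_\Omega g(w)f(w)\,K_\varphi(z,w)\,\varphi(w)\dd V_w=\int_\Omega f(w)\,\widetilde K(z,w)\,|g(w)|^2\varphi(w)\dd V_w,
\]
while a parallel computation gives $\|\widetilde K(\cdot,z)\|^2_{\varphi\cdot|g|^2}=K_\varphi(z,z)/|g(z)|^2<\infty$ for such $z$. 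Thus for each $z\in\{g\neq 0\}$ the function $\widetilde K(\cdot,z)$ lies in $A^2_{\varphi\cdot|g|^2}(\Omega)$ and reproduces evaluation at $z$, so it coincides with $K_{\varphi\cdot|g|^2}(\cdot,z)$; since $\widetilde K$ and $K_{\varphi\cdot|g|^2}$ are both sesqui-holomorphic on $\Omega\times\Omega$ and agree on the dense open set $\{g\neq 0\}$, they agree everywhere, which is the assertion.

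The only real obstacle is the second step: carrying the one-sided hypothesis over to the second variable by conjugate symmetry and checking that no singularity survives the extra division by $\overline{g(w)}$, exactly at the points where $\widetilde K$ naively reads $0/0$; after that everything is bookkeeping. The same argument admits an operator-theoretic packaging --- multiplication by $g$ is an isometry of $A^2_{\varphi\cdot|g|^2}(\Omega)$ onto a closed subspace $M\subseteq A^2_\varphi(\Omega)$, the hypothesis (together with the one-line bound $\|K_\varphi(\cdot,w)/g\|^2_{\varphi\cdot|g|^2}=K_\varphi(w,w)<\infty$ supplying the missing $L^2$-membership) says precisely that $K_\varphi(\cdot,w)\in M$ for every $w$, so $M$ has reproducing kernel $K_\varphi$, and transporting this kernel back through the isometry gives the stated formula --- but the sesqui-holomorphy point resurfaces there as the need to interpret that formula at the zeros of $g$.
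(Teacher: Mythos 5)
Your argument is correct, but it runs in the opposite direction from the paper's, and the comparison is instructive. You name the explicit candidate $\widetilde K(z,w)=K_\varphi(z,w)/(g(z)\overline{g(w)})$, verify that $\widetilde K(\cdot,z)$ lies in $A^2_{\varphi\cdot|g|^2}(\Omega)$ and reproduces every $f$ in that space, and then invoke uniqueness of the reproducing kernel, extending from the dense open set $\{g\neq 0\}$. The paper never names a candidate: it applies the reproducing property twice --- once in $A^2_{\varphi\cdot|g|^2}$ to the single function $K_\varphi(\cdot,w)/g$, once in $A^2_\varphi$ to the single function $K_{\varphi\cdot|g|^2}(\cdot,z)\,g$ --- and reads off the identity $K_\varphi(z,w)/g(z)=\overline{g(w)}\,K_{\varphi\cdot|g|^2}(z,w)$, from which divisibility by $\overline{g(w)}$ is automatic because the right-hand side is already sesqui-holomorphic. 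That shortcut spares the paper the two points you have to labor over: first, sesqui-holomorphy of $\widetilde K$ at points where $g$ vanishes in \emph{both} slots, which your two one-variable removability arguments do not quite reach (on $\{g(z)=0\}\times\{g(w)=0\}$ one needs a Hartogs/codimension-two extension, or, as you in effect arrange, the a posteriori identification with $K_{\varphi\cdot|g|^2}$ on the dense open set --- so your conclusion does stand); second, the claim that $gf$ is holomorphic for \emph{every} $f\in A^2_{\varphi\cdot|g|^2}(\Omega)$, where your justification that weighted square-integrable meromorphic functions have no poles tacitly assumes $\varphi$ does not vanish rapidly along the polar set of $g$ (the paper makes the same tacit assumption, but only for the single function $K_{\varphi\cdot|g|^2}(\cdot,z)g$). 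What your route buys in exchange is the clean operator-theoretic picture you sketch at the end: multiplication by $g$ is a surjective isometry of $A^2_{\varphi\cdot|g|^2}(\Omega)$ onto a closed subspace $M\subseteq A^2_\varphi(\Omega)$, and the hypothesis says exactly that $K_\varphi(\cdot,w)\in M$; that packaging makes the uniqueness mechanism transparent and generalizes more readily than the paper's computation.
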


\begin{proof}
The weight $\varphi$ plays no role in the following argument, so for simplicity
of notation, we suppress the subscript $\varphi$ in the calculation. We have
that
\[
\int_\Omega \left| \frac{K(z, w)}{g(z) } \right|^2 |g(z)|^2 \dd V_z =
\norm{K(\cdot, w)}^2 < \infty,
\] 
so
\begin{equation}\label{prelim:gTheoremEqi}
\frac{K(z, w)}{g(z)} \in A^2_{|g|^2}(\Omega) \text{ (as a
function of }z).
\end{equation}
Also, 
\[
\int_\Omega | K_{|g|^2}(z, w)|^2 |g(z)|^2 \dd V_z =
\norm{K_{|g|^2}(\cdot, w)}^2_{|g|^2}<\infty,
\]
so
\begin{equation}\label{prelim:gTheoremEqii}
K_{|g|^2}(z, w)g(z) \in A^2(\Omega) \text{ (as a function
of }z).
\end{equation}
By~\eqref{prelim:gTheoremEqi} and the reproducing property of $K_{|g|^2}(z, w)$,
we have
\begin{align*}
\frac{K(z,w)}{g(z)} &= \int_\Omega
\frac{K(\zeta,w)}{g(\zeta)} K_{|g|^2}(z, \zeta) |g(\zeta)|^2 \dd V_\zeta \\
&= \int_\Omega K(\zeta, w) K_{|g|^2}(z, \zeta)
\overline{g(\zeta)} \dd V_\zeta \\
&= \overline{ 
\int_\Omega K(w, \zeta) K_{|g|^2}( \zeta, z)g(\zeta) \dd V_\zeta 
}.
\end{align*}
By~\eqref{prelim:gTheoremEqii} and the reproducing property of the kernel $K(z,
w)$, this last expession is
$
\overline{K_{|g|^2}(w, z) g(w)} = \overline{g(w)}K_{|g|^2}(z, w).
$
We have shown that $\frac{K(z,w)}{g(z)}=\overline{g(w)}K_{|g|^2}(z, w)$, from
which the theorem follows.
\end{proof}

This theorem  and the ancillary Theorem~\ref{zeros:outerconeweighted} are the
only multidimensional theorems in this paper. The other results are specific to
domains of dimension $1$. As described above, the $1$-dimensional results
together with Theorem~\ref{prelim:Forelli-Rudin} can be used to study domains in
higher dimensions. Indeed, Theorem~\ref{prelim:gTheorem} provides a recipe,
illustrated by Example~\ref{prelim:gTheoremExample}, for constructing non Lu
{Qi-keng} domains in $\C^2$. The technique of this example, though elementary,
appears to be absent from the literature.

\begin{example}\label{prelim:gTheoremExample}
Let $c$ be a point in the open unit disk $\D$, and define
$\varphi(z):=|z-c|^{-2}$ and $\Omega := \left\{ (z, w)\in \C^2 \mid z \in \D,
|w| < \sqrt{\frac{1}{\pi} \varphi(z)} \right\} \subset \C^2$. By
Theorem~\ref{prelim:Forelli-Rudin}, the kernel $K_{\varphi}^\D(z, w)$ satisfies
$K_{\varphi}^\D(z, w) \equiv K^\Omega(z, 0, w, 0)$.
Theorem~\ref{prelim:gTheorem} gives $K_{\varphi}^\D(z, w) = (z-c)K^{\D}(z,
w)(\overline{w} - \overline{c})$, which clearly has zeros whenever $z=c$ or
$w=c$. Hence $\Omega$ is not Lu {Qi-keng}.
\end{example}

The domain in Example~\ref{prelim:gTheoremExample} is an unbounded domain, but
a bounded non Lu {Qi-keng} domain can be obtained via Ramanadov's Theorem
together with Hurwitz's Theorem.

Our goal is now to obtain a formula for a weighted kernel explicitly in terms of
the unweighted kernel when the weight is the modulus squared of a meromorphic
function. Theorem~\ref{prelim:gTheorem} allows us to handle the poles: the poles
appear as zeros of the same order in the formula for the weighted kernel given
by Theorem~\ref{prelim:gTheorem}. On the other hand, any zeros of the
meromorphic function associated to the weight clearly cannot appear as poles in
the formula for the weighted kernel since the kernel is holomorphic.

% Labels in this section are prepended with decomp:.
\section{Decomposition theorems}\label{decomp:section}

Theorem~\ref{prelim:gTheorem} needs modification in the case that the
meromorphic function in the weight vanishes. The goal of this section is to show
how to accomplish this modification in dimension $1$. For a general planar
domain $\Omega$ and holomorphic function $f$, we are able to express
$K^\Omega_{|f|^2}(z, w)$ in terms of the kernel associated to a ``simpler''
weight function and the basis functions for the orthogonal complement of
$A^2_{|f|^2}(\Omega)$ in a larger space of functions; when $\varphi$ is both
bounded and bounded away from zero near $c$, the normalized function
$\frac{K^{\Omega}_\varphi(z,c)}{(z-c)\sqrt{K^{\Omega}_\varphi(c,c)}}$ turns out
to span the orthogonal complement of $A^2_{|z-c|^2\varphi(z)}(\Omega)$ in
$A^2_{|z-c|^2\varphi(z)}(\Omega \setminus \{c\})$.

\begin{theorem}\label{decomp:ZeroReducingTheorem}
Let $\Omega\subset \C$ be a domain, $c\in\Omega$, and $\varphi$ be a
weight on $\Omega$ which is bounded in a neighborhood of $c$. Then
\begin{equation}
K^\Omega_{|z-c|^2\varphi}(z, w) = \frac{K^{\Omega}_\varphi(z,
w)}{(z-c)(\overline{w}-\overline{c})} -
\frac{K^{\Omega}_\varphi(z,c)K^{\Omega}_\varphi(c,
w)}{(z-c)(\overline{w}-\overline{c})K^{\Omega}_\varphi(c,c)}.
\label{decomp:ZeroReducingFormula}
\end{equation} 
\end{theorem}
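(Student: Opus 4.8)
The plan is to identify the right-hand side of \eqref{decomp:ZeroReducingFormula} as a reproducing kernel for the weighted space $A^2_{|z-c|^2\varphi}(\Omega)$ by verifying the two defining properties: that for each fixed $w$ it lies in the space (as a function of $z$), and that it reproduces every element of the space under the weighted inner product $\ip{\cdot}{\cdot}_{|z-c|^2\varphi}$. Uniqueness of the reproducing kernel then gives the identity. Throughout I would write $\psi(z) := |z-c|^2\varphi(z)$ for the new weight and abbreviate $K := K^\Omega_\varphi$. The key structural observation is that multiplication by $(z-c)$ is a unitary-type correspondence between $A^2_{|z-c|^2\varphi}(\Omega)$ and a codimension-one subspace of $A^2_\varphi(\Omega)$: indeed, for holomorphic $h$ on $\Omega$ we have $\norm{h}^2_\psi = \norm{(z-c)h}^2_\varphi$, so $h \mapsto (z-c)h$ maps $A^2_\psi(\Omega)$ isometrically onto $M := \{g \in A^2_\varphi(\Omega) \mid g(c) = 0\}$ — the point-evaluation at $c$ makes sense and is bounded on $A^2_\varphi(\Omega)$ because $\varphi$ is bounded near $c$, so a ball around $c$ embeds into an ordinary (unweighted, up to a constant) Bergman space there. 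That $M$ has codimension one, with $A^2_\varphi(\Omega) = M \oplus \C\cdot K(\cdot,c)$ orthogonally, is the standard fact that $K(\cdot,c)$ represents evaluation at $c$.

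Granting this, the first step is to check membership: for fixed $w \in \Omega$, the function
\[
F_w(z) := \frac{K(z,w)}{(z-c)(\overline{w}-\overline{c})} - \frac{K(z,c)K(c,w)}{(z-c)(\overline{w}-\overline{c})K(c,c)}
\]
is holomorphic in $z$ on $\Omega$, because the numerator $K(z,w) - \frac{K(z,c)K(c,w)}{K(c,c)}$ vanishes at $z=c$ (evaluate: $K(c,w) - K(c,w) = 0$), cancelling the simple zero of $(z-c)$; and $(z-c)F_w(z) \in M$ since it equals $\frac{1}{\overline{w}-\overline{c}}$ times the orthogonal projection of $K(\cdot,w)$ onto $M$, hence lies in $A^2_\varphi(\Omega)$, so $F_w \in A^2_\psi(\Omega)$. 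The main work is the reproducing identity: take $h \in A^2_\psi(\Omega)$, set $g := (z-c)h \in M$, and compute $\ip{h}{F_w}_\psi$. By the isometry this equals $\ip{g}{(z-c)F_w}_\varphi = \frac{1}{w-c}\ip{g}{P_M K(\cdot,w)}_\varphi$, where $P_M$ is the orthogonal projection onto $M$; since $g \in M$ this is $\frac{1}{w-c}\ip{g}{K(\cdot,w)}_\varphi = \frac{1}{w-c} g(w) = \frac{1}{w-c}(w-c)h(w) = h(w)$. Being careful with the conjugate in the second slot of the inner product, and with the constant from $\overline{w}-\overline{c}$ versus $w-c$, is where the bookkeeping must be watched, but the computation is essentially forced.

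The step I expect to be the genuine obstacle — rather than the formal manipulation — is justifying that point evaluation at $c$ is bounded on $A^2_\varphi(\Omega)$ and, relatedly, that $g/(z-c)$ is genuinely in $A^2_\psi(\Omega)$ for $g \in M$ rather than merely locally square-integrable. Boundedness of $\varphi$ near $c$ is exactly the hypothesis that rescues this: on a small disk $D(c,r) \Subset \Omega$ one has $\int_{D(c,r)} |g|^2\,dV \le (\sup_{D(c,r)}\varphi)^{-1}\cdot\,$(nothing) — more precisely $\varphi \le C$ there gives no lower control, so instead one uses that $g$ holomorphic on $\Omega$ with $\int_\Omega |g|^2\varphi < \infty$ together with $\varphi$ bounded \emph{below} away from a neighborhood of $c$ is not assumed; what one actually needs is just that $g \in A^2_\varphi(\Omega)$ is holomorphic hence $g(c)$ is defined, and that $\norm{g/(z-c)}_\psi = \norm{g}_\varphi < \infty$ automatically once $g(c)=0$ makes $g/(z-c)$ holomorphic. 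So the only real point is the orthogonal decomposition $A^2_\varphi(\Omega) = M \oplus \C K(\cdot,c)$, i.e. that $M$ is closed and of codimension one, which follows from boundedness of the evaluation functional $g \mapsto g(c)$ on $A^2_\varphi(\Omega)$ — and that in turn follows from the upper bound on $\varphi$ near $c$ via the sub-mean-value property on a disk about $c$. I would state this boundedness as a short lemma, prove the orthogonal decomposition, and then the theorem drops out in a few lines.
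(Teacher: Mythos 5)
Your argument is correct and arrives at the same rank-one-correction structure as the paper, but it is packaged differently. The paper never leaves the spaces with weight $|z-c|^2\varphi$: it shows that $\frac{K_\varphi(z,w)}{(z-c)(\overline{w}-\overline{c})}$ reproduces $A^2_{|z-c|^2\varphi}(\Omega)$ inside the larger space $A^2_{|z-c|^2\varphi}(\Omega\setminus\{c\})$, that $\psi(z)=K_\varphi(z,c)/(z-c)$ spans the orthogonal complement of the smaller space in the larger one, and then subtracts the component along $\psi$ so that the result lies in $A^2_{|z-c|^2\varphi}(\Omega)$, concluding by uniqueness of the kernel. You instead transport everything into $A^2_\varphi(\Omega)$ via the isometry $h\mapsto (z-c)h$ onto $M=\{g\in A^2_\varphi(\Omega)\mid g(c)=0\}$ and use the splitting $A^2_\varphi(\Omega)=M\oplus\C\, K_\varphi(\cdot,c)$; your $P_M K_\varphi(\cdot,w)$ is precisely the image under that isometry of the paper's corrected kernel, and $K_\varphi(\cdot,c)$ is the image of the paper's $\psi$. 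The computations are line-for-line equivalent, but your packaging buys something: the punctured domain and the $L^2$ removable singularity discussion disappear entirely, since for $g\in M$ the quotient $g/(z-c)$ is holomorphic on $\Omega$ by the ordinary factor theorem, and the bookkeeping with the conjugated scalar $\overline{w}-\overline{c}$ checks out as you describe.

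One caveat on your final paragraph: the claim that boundedness of the evaluation functional at $c$ on $A^2_\varphi(\Omega)$ follows ``from the upper bound on $\varphi$ near $c$ via the sub-mean-value property'' goes the wrong way. The estimate $|g(c)|^2\lesssim \int_{D(c,r)}|g|^2\,dV$ converts into a bound by $\norm{g}_\varphi^2$ only if $\varphi$ is bounded \emph{below} away from zero on (an annulus inside) $D(c,r)$; an upper bound on $\varphi$ gives the reverse comparison, and its actual role is to rule out the degenerate situation in which every element of $A^2_\varphi$ is forced to vanish at $c$ (the $\varphi(z)=1/|z|^2$ example in the paper's remark). This is not a defect of your proof relative to the paper's: the stated formula already presupposes that $K_\varphi(\cdot,c)$ exists and $K_\varphi(c,c)\neq 0$, since the latter sits in a denominator, and the paper's own proof uses both facts under the same hypothesis. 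But you should either state that presupposition explicitly or strengthen the hypothesis to ``bounded and bounded away from zero near $c$,'' as the paper does in its subsequent theorems, rather than presenting the upper bound alone as yielding boundedness of evaluation.
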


\begin{rem}
The requirement that $\varphi$ be bounded in a neighborhood of $c$ excludes
degenerate cases such as $\varphi(z)=\frac{1}{|z|^2}$ with $c=0$. The right hand
side of Equation~\ref{decomp:ZeroReducingFormula} apparently has singularities
at $z=c$ and $w=c$, but these singularities are removable.
\end{rem}

\begin{proof}
Let $\psi(z):=\frac{K^\Omega_\varphi(z, c)}{z-c}$. Clearly $\psi\in
A^2_{|z-c|^2\varphi}(\Omega \setminus \{c\})$. Our strategy is as follows:
\begin{enumerate}
  \setlength{\itemsep}{\baselineskip}
  \item\label{decomp:Thing1} $\frac{K^{\Omega}_{\varphi}(z,
  w)}{(z-c)(\overline{w}-\overline{c})}$ reproduces elements of
  $A^2_{|z-c|^2\varphi}(\Omega)$ in $A^2_{|z-c|^2\varphi}(\Omega\setminus
  \{c\})$.
  \item\label{decomp:Thing2} $\psi(z)$ is orthogonal to
  $A^2_{|z-c|^2\varphi}(\Omega)$ in $A^2_{|z-c|^2\varphi}(\Omega\setminus
  \{c\})$; as a consequence, 
  \item\label{decomp:Thing3} $\psi(z)$ is orthogonal to
  $K^{\Omega}_{|z-c|^2\varphi}(z, w)$ in $A^2_{|z-c|^2\varphi}(\Omega\setminus
  \{c\})$.
  \item\label{decomp:Thing4} From \eqref{decomp:Thing1} and
  \eqref{decomp:Thing2}, $Q(z, w):=\frac{K^{\Omega}_{\varphi}(z,
  w)}{(z-c)(\overline{w}-\overline{c})} - c_0(w)\psi(z)$ also reproduces
  elements of $A^2_{|z-c|^2\varphi}(\Omega)$ in
  $A^2_{|z-c|^2\varphi}(\Omega\setminus \{c\})$, where $c_0(w)$ is arbitrary.
  \item\label{decomp:Thing5} Setting
  $c_0(w):=\overline{\psi(w)}/K^\Omega_\varphi(c, c)$, we have $Q\in
  A^2_{|z-c|^2\varphi}(\Omega)$; it follows from \eqref{decomp:Thing4} and
  the uniqueness of the Bergman kernel that $Q(z, w)\equiv
  K^\Omega_{|z-c|^2\varphi}(z, w)$.
  %\vspace{10pt}
\end{enumerate} 
Once \eqref{decomp:Thing1} and \eqref{decomp:Thing2} are proven,
\eqref{decomp:Thing3} and \eqref{decomp:Thing4} are obvious.

\noindent\emph{Proof of~\eqref{decomp:Thing1}:} Let $f\in
A^2_{|z-c|^2\varphi}(\Omega)$.
We have
\begin{align*}
&\int_{\Omega\setminus \{c\}} f(w) \frac{K^{\Omega}_{\varphi}(z,
  w)}{(z-c)(\overline{w}-\overline{c})} |w-c|^2\varphi(w) \dd V_w \\
&\qquad =\frac{1}{z-c} \int_\Omega K^\Omega_\varphi (z, w)
f(w)(w-c)\varphi(w) \dd V_w \\ 
&\qquad =\frac{1}{z-c} f(z)(z-c) &(\text{since } f(z)(z-c)\in A^2_\varphi
(\Omega))\\
&\qquad =f(z).
\end{align*}
This proves~\eqref{decomp:Thing1}.

\noindent\emph{Proof of~\eqref{decomp:Thing2}:} Let $f\in
A^2_{|z-c|^2\varphi}(\Omega)$.
We have
\begin{align*}
&\int_{\Omega\setminus \{c\}} f(w) \overline{\psi(w)} |w-c|^2\varphi(w) \dd V_w
\\
&\qquad =\int_{\Omega\setminus \{c\}} f(w) \frac{\overline{K^\Omega_\varphi(w,
c)}}{\overline{w}-\overline{c}} |w-c|^2\varphi(w) \dd V_w \\
&\qquad =\int_{\Omega} f(w)(w-c) K^\Omega_\varphi(c, w)\varphi(w) \dd V_w \\
&\qquad =0 &(\text{since } f(z)(z-c)\in A^2_\varphi(\Omega)).
\end{align*}
This proves~\eqref{decomp:Thing2}.

To finish the proof, observe that
for $c_0(w):=\overline{\psi(w)}/K^\Omega_\varphi(c, c)$, we have that
\[
Q(z, w)\equiv \frac{K^{\Omega}_\varphi(z, w)}{(z - c)(\overline{w} -
\overline{c})} - \frac{K^{\Omega}_\varphi(z,c)K^{\Omega}_\varphi(c, w)}{(z -
c)(\overline{w} - \overline{c})K^{\Omega}_\varphi(c,c)},
\]
which has a removable singularity at $z=c$ and $w=c$. Thus~\eqref{decomp:Thing5}
holds, and the theorem is proven.
\end{proof}

Theorem~\ref{decomp:ZeroReducingTheorem} combined with
Theorem~\ref{prelim:gTheorem} allows one to produce an explicit formula for
$K^\Omega_{|f|^2}(z, w)$ in terms of $K^\Omega(z, w)$ in the case that $f$ is
meromorphic on $\Omega$ with a finite number of zeros by just iterating the
formula of Equation~\ref{decomp:ZeroReducingFormula}. In fact,
Theorem~\ref{decomp:ZeroReducingTheorem} is a special case of the following more
general theorem.

\begin{theorem}\label{decomp:ZeroReducingTheoremGeneral}
Let $\Omega$ be a planar domain, $\{c_j\}_{j=1}^m$ a sequence of $m$ distinct
points in $\Omega$, $\{\alpha_j\}_{j=1}^m$ a sequence of positive integers, and
$\varphi$ a weight such that for all $j$, $\varphi$ is both bounded and bounded
away from zero in a neighborhood of $c_j$. Define the following polynomials:
\begin{align*}
p(z)&:=(z-c_1)^{\alpha_1}(z-c_2)^{\alpha_2}\dotsm (z-c_m)^{\alpha_m};\\
p_{j,k}(z)&:=(z-c_1)^{\alpha_1}(z-c_2)^{\alpha_2}\dotsm
(z-c_{j-1})^{\alpha_{j-1}} (z-c_j)^{k},\;\;(1\leq j \leq m, 1\leq k \leq
\alpha_j);\\
q_{j,k}(z)&:=p(z)/p_{j,k}(z) \\
&=(z-c_{j})^{\alpha_{j-k}}(z-c_{j+1})^{\alpha_{j+1}}(z-c_{j+2})^{\alpha_{j+2}}\dotsm
(z-c_m)^{\alpha_{m}}.
\end{align*}
Then
\[
K^\Omega_{|p(z)|^{2}\varphi}(z, w) =
\frac{K^\Omega_\varphi(z, w)}{p(z)\overline{p(w)}}
-\sum_{j=1}^m \sum_{k=1}^{\alpha_j} \frac{K^\Omega_{|q_{j,k}|^{2}\varphi}(z,c_j)
K^\Omega_{|q_{j,k}|^{2}\varphi}(c_j, w)}{ p_{j,k}(z) \overline{p_{j,k}(w)}
K^\Omega_{|q_{j,k}|^{2}\varphi}(c_j, c_j)}.
\]
\end{theorem}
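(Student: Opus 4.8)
The plan is to prove the formula by induction on the degree $N=\sum_{j=1}^{m}\alpha_{j}$ of $p$, peeling off one linear factor at each step and applying Theorem~\ref{decomp:ZeroReducingTheorem}. The base case $N=1$ forces $m=1$ and $\alpha_{1}=1$: then $p(z)=z-c_{1}=p_{1,1}(z)$ and $q_{1,1}(z)\equiv 1$, so $|q_{1,1}|^{2}\varphi=\varphi$ and the lone summand of the asserted double sum is exactly the correction term in Theorem~\ref{decomp:ZeroReducingTheorem}. Thus the base case is Theorem~\ref{decomp:ZeroReducingTheorem} itself.

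For the inductive step, factor $p(z)=(z-c_{1})\,q_{1,1}(z)$, where $q_{1,1}(z)=(z-c_{1})^{\alpha_{1}-1}(z-c_{2})^{\alpha_{2}}\dotsm(z-c_{m})^{\alpha_{m}}$ has degree $N-1$. Since $\varphi$ is bounded near $c_{1}$ and $q_{1,1}$ is a polynomial, $|q_{1,1}|^{2}\varphi$ is bounded near $c_{1}$, so Theorem~\ref{decomp:ZeroReducingTheorem} applies with weight $|q_{1,1}|^{2}\varphi$ and distinguished point $c_{1}$, yielding
\begin{align*}
K^{\Omega}_{|p|^{2}\varphi}(z,w)&=\frac{K^{\Omega}_{|q_{1,1}|^{2}\varphi}(z,w)}{(z-c_{1})(\overline{w}-\overline{c_{1}})}\\
&\quad-\frac{K^{\Omega}_{|q_{1,1}|^{2}\varphi}(z,c_{1})K^{\Omega}_{|q_{1,1}|^{2}\varphi}(c_{1},w)}{(z-c_{1})(\overline{w}-\overline{c_{1}})K^{\Omega}_{|q_{1,1}|^{2}\varphi}(c_{1},c_{1})}.
\end{align*}
Because $p_{1,1}(z)=z-c_{1}$, the second term is precisely the $(j,k)=(1,1)$ summand of the claimed identity.

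It remains to expand $K^{\Omega}_{|q_{1,1}|^{2}\varphi}$ in the first term. Apply the inductive hypothesis to the reduced data $\mathcal{C}'$ --- the points $\{c_{j}\}$ with $\alpha_{1}$ decremented (and $c_{1}$ dropped if $\alpha_{1}=1$), for which $p_{\mathcal{C}'}=q_{1,1}$ --- and then divide through by $(z-c_{1})(\overline{w}-\overline{c_{1}})$. The leading term becomes $K^{\Omega}_{\varphi}(z,w)/\big(p(z)\overline{p(w)}\big)$ since $(z-c_{1})q_{1,1}(z)=p(z)$, which is the first term of the claim; and each remaining summand from $\mathcal{C}'$ matches a summand of the target sum over $\mathcal{C}$ under the relabeling $(1,k')\mapsto(1,k'+1)$ when $\alpha_{1}\geq 2$ and $(j,k)\mapsto(j,k)$ for $j\geq 2$, because the identities $(z-c_{1})\,p^{\mathcal{C}'}_{j,k}=p^{\mathcal{C}}_{j,k}$ and $q^{\mathcal{C}'}_{j,k}=q^{\mathcal{C}}_{j,k}$ hold (the extra factor $z-c_{1}$ is absorbed into the $p$'s and cancels in the $q$'s). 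Together with the $(1,1)$ summand already produced, this reassembles the full double sum, and the induction closes.

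The analytic content is entirely supplied by Theorem~\ref{decomp:ZeroReducingTheorem}; the work in the inductive step is the index bookkeeping --- verifying $(z-c_{1})q_{1,1}=p$, $(z-c_{1})p^{\mathcal{C}'}_{j,k}=p^{\mathcal{C}}_{j,k}$, and $q^{\mathcal{C}'}_{j,k}=q^{\mathcal{C}}_{j,k}$ for the relabeled indices --- together with confirming that the hypotheses propagate along the recursion: each intermediate weight $|q_{j,k}|^{2}\varphi$ is bounded near $c_{j}$ (immediate, as $\varphi$ is and the $q_{j,k}$ are polynomials), and, by the assumption that $\varphi$ is bounded and bounded away from zero near each $c_{j}$, it behaves near $c_{j}$ like $|z-c_{j}|^{2\beta}$ times a bounded nonvanishing factor, which is exactly the regime in which Theorem~\ref{decomp:ZeroReducingTheorem} operates (so the normalizing constants $K^{\Omega}_{|q_{j,k}|^{2}\varphi}(c_{j},c_{j})$ are nonzero and the apparent singularities at $z=c_{j}$, $w=c_{j}$ are removable). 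I expect this combinatorial matching to be the only real obstacle.
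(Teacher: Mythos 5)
Your proof is correct, but it takes a genuinely different route from the paper's. You prove the identity by induction on $\deg p$, peeling off one linear factor at a time and applying Theorem~\ref{decomp:ZeroReducingTheorem} with the intermediate weight $|q_{1,1}|^2\varphi$ (legitimate, since that weight is still bounded near $c_1$), then verifying the index bookkeeping $(z-c_1)q_{1,1}=p$, $(z-c_1)p^{\mathcal{C}'}_{j,k}=p^{\mathcal{C}}_{j',k'}$, $q^{\mathcal{C}'}_{j,k}=q^{\mathcal{C}}_{j',k'}$ under the relabeling $(1,k)\mapsto(1,k+1)$; I checked these and they hold in both the $\alpha_1\geq 2$ and $\alpha_1=1$ cases. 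The paper instead proves the theorem structurally: it shows that the functions $\psi_{j,k}(z)=K^\Omega_{|q_{j,k}|^2\varphi}(z,c_j)/p_{j,k}(z)$ form an orthogonal basis for the orthogonal complement of $A^2_{|p|^2\varphi}(\Omega)$ in $A^2_{|p|^2\varphi}(\Omega\setminus\{c_j\}_{j=1}^m)$, so that the double sum is exactly the kernel of the projection onto that complement; it carries out only the mutual-orthogonality computation and leaves completeness and the reproducing property as an exercise. (The paper does remark, just before the theorem, that iterating Equation~\eqref{decomp:ZeroReducingFormula} produces such formulas --- your proof is essentially that remark made rigorous.) The trade-off: your induction yields the full identity with no gaps beyond bookkeeping and is the more complete written argument, while the paper's basis description explains \emph{why} the correction terms have this form and extends to $m=\infty$, as noted after its proof. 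Both arguments share the same implicit requirement that the normalizing constants $K^\Omega_{|q_{j,k}|^2\varphi}(c_j,c_j)$ be nonzero, which you flag but neither you nor the paper fully justifies; for a bounded domain with locally bounded weight this is immediate since constants lie in the space.
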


\begin{remark}
By the $L^2$-version of the Riemann Removable Singularity
Theorem~\cite[E.3.2]{range_holomorphic_2010}, when a weight $\psi$ is both
bounded and bounded away from zero in a neighborhood of $c$, then
$K_\psi^{\Omega \setminus \{c\}} (z, w) \equiv K_\psi^{\Omega}(z, w)$.
\end{remark}

\begin{proof}
We wish to show that the functions $\psi_{j,k}(z):=\frac{K^\Omega_{|q_{j,
k}|^2\varphi}(z, c_j)}{p_{j, k}(z)}$ form a basis for the orthogonal complement of
$A^2_{|p|^2\varphi}(\Omega)$ in $A^2_{|p|^2\varphi}(\Omega \setminus
\{c_j\}_{j=1}^m)$. We prove only that the $\psi_{j, k}$ are mutually orthogonal,
the rest of the proof being an easy exercise.

For $\psi_{j_0, k_0}$ and $\psi_{j_1, k_1}$ distinct, we may assume $j_0>j_1$ or
else $j_0=j_1$ and $k_0>k_1$. Then 
\[
p_{j_0, j_1}(z) = p_{j_1,k_1}(z) (z-c_{j_1})^{\alpha_{j_1}-k_1}
(z-c_{j_1+1})^{\alpha_{j_1+1}} \dotsm (z-c_{j_0})^{k_0},
\]
and
\begin{align*}
&\ip{\psi_{j_0, k_0}(z)}{\psi_{j_1, k_1}(z)}_{|p|^2\varphi} \\
&\qquad = \int_{\Omega \setminus
\{c_j\}_{j=1}^m} \frac{K^\Omega_{|q_{j_0, k_0}|^2\varphi}(z, c_{j_0})}{p_{j_0,
k_0}(z)}
\frac{K^\Omega_{|q_{j_1, k_1}|^2\varphi}(c_{j_1}, z)}{\overline{p_{j_1,
k_1}(z)}} |p(z)|^2\varphi(z) \dd V_z \\
&\qquad = \int_\Omega K^\Omega_{|q_{j_0, k_0}|^2\varphi}(z, c_{j_0}) \\
& \qquad\qquad \times K^\Omega_{|q_{j_1, k_1}|^2\varphi}(c_{j_1}, z) \overline{
(z-c_{j_1})^{\alpha_{j_1}-k_1} (z-c_{j_1+1})^{\alpha_{j_1+1}} \dotsm
(z-c_{j_0})^{k_0} } \\
& \qquad\qquad \times |q_{j_0, k_0}(z)|^2\varphi(z) \dd V_z \\ 
&\qquad=0.
\end{align*}
\end{proof}

The proof does not depend on $m$ being finite; we can still construct an
orthonormal basis for the orthogonal complement of $A^2_{|p|^2\varphi}(\Omega)$
in $A^2_{|p|^2\varphi}(\Omega \setminus \{c_j\}_{j=1}^m)$. However, this is of
limited practical value since in that case
Theorem~\ref{decomp:ZeroReducingTheoremGeneral} fails to give a closed form
expression for the original weighted kernel. Moreover, in practice the simpler
Theorem~\ref{decomp:ZeroReducingTheorem} is sufficient.

\section{Zeros of weighted kernels}\label{sect:zeros}

We now study the relationship of the zeros of these weighted kernels have to the
zeros of the simpler kernels.

\begin{theorem}\label{zeros:ZerosTheoremA}
Let $\Omega$ be a domain in $\C$, let $c,z_0, w_0 \in \Omega$, and let $\varphi$
be a weight on $\Omega$ that is bounded and bounded away from zero in some
neighborhood of $c$. Suppose $K_{|z-c|^2\varphi}(z_0, w_0)=0$. Then
$K_{\varphi}(z_0, w_0)=0$ if and only if either $K_{\varphi}(z_0, c)=0$ or
$K_{\varphi}(c, w_0)=0$.
\end{theorem}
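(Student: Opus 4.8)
The plan is to extract the statement directly from the decomposition formula of Theorem~\ref{decomp:ZeroReducingTheorem}, which applies here because the hypothesis that $\varphi$ is bounded near $c$ is part of what we assume (``bounded away from zero'' will only be needed to rule out a degenerate case). Clearing the denominators in~\eqref{decomp:ZeroReducingFormula} yields the identity
\[
(z-c)(\overline{w}-\overline{c})\,K_{|z-c|^2\varphi}(z, w) \;=\; K_{\varphi}(z, w) - \frac{K_{\varphi}(z,c)\,K_{\varphi}(c, w)}{K_{\varphi}(c,c)} ,
\]
which a priori holds on $(\Omega\setminus\{c\})\times(\Omega\setminus\{c\})$ but, both sides being holomorphic in $z$ and conjugate-holomorphic in $w$ there and that set being dense in $\Omega\times\Omega$, persists on all of $\Omega\times\Omega$. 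Alternatively, the cases $z_0=c$ and $w_0=c$ can be handled separately and trivially (if $z_0=c$ both sides of the desired equivalence reduce to ``$K_\varphi(c,w_0)=0$'', and symmetrically for $w_0=c$), so one may instead simply assume $z_0\ne c$ and $w_0\ne c$ and substitute into~\eqref{decomp:ZeroReducingFormula} directly.

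Next I would dispose of the possibility $K_\varphi(c,c)=0$. Because $\varphi$ is bounded near $c$, this forces $A^2_\varphi(\Omega)=\{0\}$: if some nonzero $f\in A^2_\varphi(\Omega)$ vanished to order $k\ge 1$ at $c$, then $f(z)/(z-c)^k$ would again lie in $A^2_\varphi(\Omega)$ (the extra factor is harmless near $c$, where $\varphi$ is bounded, and bounded away from $c$) and would not vanish at $c$, contradicting $K_\varphi(\cdot,c)\equiv 0$. In that degenerate case $K_\varphi\equiv 0$ and the asserted equivalence is vacuously true, so from now on one assumes $K_\varphi(c,c)>0$.

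Finally, evaluating the displayed identity at $(z_0,w_0)$ and using the hypothesis $K_{|z-c|^2\varphi}(z_0,w_0)=0$, the left-hand side vanishes, giving
\[
K_\varphi(z_0,w_0)\,K_\varphi(c,c) \;=\; K_\varphi(z_0,c)\,K_\varphi(c,w_0) .
\]
Since $K_\varphi(c,c)>0$, the left side is zero exactly when the product on the right is, i.e. exactly when $K_\varphi(z_0,c)=0$ or $K_\varphi(c,w_0)=0$, which is the claim. I expect no real obstacle: the mathematical content is entirely carried by Theorem~\ref{decomp:ZeroReducingTheorem}, and the only points demanding a word of care are the passage from~\eqref{decomp:ZeroReducingFormula} (with its removable singularities at $z=c$ and $w=c$) to a genuine identity on $\Omega\times\Omega$, and the observation that $K_\varphi(c,c)=0$ cannot produce an actual counterexample.
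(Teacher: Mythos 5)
Your proof is correct and follows the same route as the paper: substitute the hypothesis into the decomposition formula of Theorem~\ref{decomp:ZeroReducingTheorem} and read off the equivalence. The paper's own proof is just that one substitution followed by ``the theorem is evident''; your additional care with the cases $z_0=c$, $w_0=c$, and $K_\varphi(c,c)=0$ is sound and only makes the argument more complete.
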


\begin{proof}
By the hypothesis and Theorem~\ref{decomp:ZeroReducingTheorem}, 
\[
0 % = K_{|z-c|^2\varphi}(z_0, w_0) 
= \frac{K_\varphi(z_0,
w_0)}{(z_0-c)(\overline{w_0}-\overline{c})} -
\frac{K_\varphi(z_0,c)K_\varphi(c,
w_0)}{(z_0-c)(\overline{w_0}-\overline{c})K_\varphi(c,c)},
\]
from which the theorem is evident.
\end{proof}

Requiring that $\varphi$ be bounded  and bounded away from zero in a
neighborhood of $c$ determines the order of the zero of the weight
$|z-c|^2\varphi(c)$ to be two, a fact to which there are two significant
consequences. First, as a consequence of the $L^2$-version of the Riemann
Removable Singularity Theorem, $K_\varphi^\Omega(z, w) \equiv K_\varphi^{\Omega
\setminus \{c\}}(z, w)$ on $(\Omega \setminus \{c\})\times (\Omega \setminus
\{c\})$. We employ this fact in the next several theorems without comment.
Second, for zeros of higher orders in the weight, we would need to use
Theorem~\ref{decomp:ZeroReducingTheoremGeneral} rather than
Theorem~\ref{decomp:ZeroReducingTheorem}, which would not give the
conclusion of Theorem~\ref{zeros:ZerosTheoremA}.

Theorem~\ref{zeros:ZerosTheoremA} says the value of
$K^\Omega_{\varphi}(z,w)$ at $c$ affects the zero set of
$K^\Omega_{|z-c|^2\varphi}(z,w)$. Compare this to the case that
$c\not\in\Omega$, in which case Theorem~\ref{prelim:gTheorem} says that the zero
sets of both kernels coincide.

Theorem~\ref{zeros:ZerosTheoremA} assumes $K^\Omega_{|z-c|^2\varphi}(z,w)$ has a
zero and then says when $K^\Omega_{\varphi}(z,w)$ has a zero. The next theorem
assumes $K_\varphi(z, w)$ has a zero and then says when
$K^\Omega_{|z-c|^2\varphi}(z,w)$ has a zero.

\begin{theorem}\label{zeros:ZerosTheoremB}
Let $\Omega$ be a domain in $\C$, let $z_0, c\in\Omega$ with $z_0\neq c$, and
let $\varphi$ be a weight on $\Omega$ that is bounded and bounded away from zero
in some neighborhood of $c$. Suppose $K_\varphi(z_0, c)=0$. Then
$K_{|z-c|^2\varphi}(z_0, w)$ has a zero of order $m-1$ at $w=c$ if and only if
$K_\varphi(z_0, w)$ has a zero of order $m$ at $w=c$.
\end{theorem}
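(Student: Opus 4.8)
The plan is to substitute $z=z_0$ into the decomposition formula of Theorem~\ref{decomp:ZeroReducingTheorem} and use the hypothesis $K_\varphi(z_0,c)=0$ to kill the second term. Since $\varphi$ is bounded and bounded away from zero near $c$, the number $K^\Omega_\varphi(c,c)$ is positive and Theorem~\ref{decomp:ZeroReducingTheorem} applies, giving
\[
K^\Omega_{|z-c|^2\varphi}(z, w) = \frac{K^{\Omega}_\varphi(z, w)}{(z-c)(\overline{w}-\overline{c})} - \frac{K^{\Omega}_\varphi(z,c)K^{\Omega}_\varphi(c, w)}{(z-c)(\overline{w}-\overline{c})K^{\Omega}_\varphi(c,c)}.
\]
Putting $z=z_0$ and invoking $K^\Omega_\varphi(z_0,c)=0$, the second summand vanishes identically in $w$, so that, as conjugate-holomorphic functions of $w$ on $\Omega\setminus\{c\}$,
\[
K^\Omega_{|z-c|^2\varphi}(z_0, w) = \frac{K^{\Omega}_\varphi(z_0, w)}{(z_0-c)(\overline{w}-\overline{c})}.
\]
Note that $K^\Omega_\varphi(z_0,\cdot)$ vanishing at $c$ is exactly what makes the apparent singularity at $w=c$ on the right removable, consistent with the Remark following Theorem~\ref{decomp:ZeroReducingTheorem}.

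Next I would pass to honestly holomorphic functions by conjugating in $w$. Set $G(w):=\overline{K^\Omega_\varphi(z_0, w)}$ and $H(w):=\overline{K^\Omega_{|z-c|^2\varphi}(z_0, w)}$, both holomorphic in $w$ near $c$; the displayed identity becomes
\[
H(w) = \frac{G(w)}{(\overline{z_0}-\overline{c})(w-c)}.
\]
By definition, the statement ``$K^\Omega_\varphi(z_0,w)$ has a zero of order $m$ at $w=c$'' means $G(w)=(w-c)^m\tilde g(w)$ with $\tilde g$ holomorphic and $\tilde g(c)\neq 0$; since the hypothesis forces $m\geq 1$, we may divide and obtain $H(w)=(w-c)^{m-1}\tilde g(w)/(\overline{z_0}-\overline{c})$. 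Because $z_0\neq c$, the leading coefficient $\tilde g(c)/(\overline{z_0}-\overline{c})$ is nonzero, so $H$ — equivalently $K^\Omega_{|z-c|^2\varphi}(z_0,w)$ — has a zero of order exactly $m-1$ at $w=c$. Conversely, multiplying the displayed identity by $(\overline{z_0}-\overline{c})(w-c)$ shows that a zero of order $m-1$ of $H$ at $c$ produces a zero of order $1+(m-1)=m$ of $G$ at $c$. This yields the asserted equivalence.

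The argument is essentially bookkeeping once the right-hand term of the decomposition has been discarded, so the only point requiring care is that bookkeeping: one must interpret ``order of vanishing in $w$'' through the conjugate-holomorphic dependence, and one must check that the leading coefficient of $H$ at $c$ is not accidentally zero, which is precisely where the standing hypothesis $z_0\neq c$ enters (if $z_0=c$ the factor $1/(z_0-c)$ is meaningless). I would also dispose of the degenerate case $K^\Omega_\varphi(z_0,\cdot)\equiv 0$, in which both kernels vanish identically in $w$ and the statement is vacuous; so we may assume $K^\Omega_\varphi(z_0,w)$ is not the zero function and all orders of vanishing in question are finite. No deeper obstacle arises, and in particular no appeal to Theorem~\ref{decomp:ZeroReducingTheoremGeneral} is needed, since the weight $|z-c|^2\varphi$ has a zero of order exactly two at $c$.
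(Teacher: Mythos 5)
Your proof is correct and follows essentially the same route as the paper's: substitute $z=z_0$ into the decomposition of Theorem~\ref{decomp:ZeroReducingTheorem}, use $K_\varphi(z_0,c)=0$ to kill the second term, and read off the order of vanishing from the resulting identity $K_{|z-c|^2\varphi}(z_0,w)=\frac{1}{z_0-c}\cdot\frac{K_\varphi(z_0,w)}{\overline{w}-\overline{c}}$. The extra bookkeeping you supply (conjugating to get honestly holomorphic functions of $w$, checking the leading coefficient, and disposing of the degenerate case) is sound and merely makes explicit what the paper leaves implicit.
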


\begin{proof}
By Theorem~\ref{decomp:ZeroReducingTheorem}, 
\begin{align*}
K_{|z-c|^2\varphi}(z_0, w) &= \frac{K_\varphi(z_0,
w)}{(z_0-c)(\overline{w}-\overline{c})} -
\frac{K_\varphi(z_0,c)K_\varphi(c,w)}{(z_0-c)
(\overline{w}-\overline{c})K_\varphi(c,c)} \\
&= \frac{1}{z_0-c}\cdot \frac{K_\varphi(z_0, w)}{\overline{w}-\overline{c}}.
\end{align*}
If $m$ is the order of the zero of $K_\varphi(z_0, w)$ at $w=c$, then this last
expression has a zero of order $m-1$ at $w=c$.
\end{proof}

\begin{theorem}\label{zeros:ZerosTheoremC}
Let $\Omega$ be a domain in $\C$, let $c_0,c_1, c_2\in\Omega$ be distinct, and
let  $\varphi$ be a weight on $\Omega$ that in some neighborhood of $c_0$ is
bounded and bounded away from zero. Suppose either $K_\varphi(c_0,
c_1) = 0$ or $K_\varphi(c_0, c_2)=0$. Then $K_{|z-c_0|^2\varphi}(c_1, c_2)=0$ if
and only if $K_{\varphi}(c_1, c_2)=0$.
\end{theorem}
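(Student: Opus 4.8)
The plan is to read off the conclusion directly from the decomposition formula of Theorem~\ref{decomp:ZeroReducingTheorem}, specialized to the three given points. Setting $c = c_0$, $z = c_1$, and $w = c_2$ in Equation~\eqref{decomp:ZeroReducingFormula} yields
\[
K_{|z-c_0|^2\varphi}(c_1, c_2) = \frac{K_\varphi(c_1, c_2)}{(c_1 - c_0)(\overline{c_2} - \overline{c_0})} - \frac{K_\varphi(c_1, c_0)\,K_\varphi(c_0, c_2)}{(c_1 - c_0)(\overline{c_2} - \overline{c_0})\,K_\varphi(c_0, c_0)}.
\]
As in the preceding theorems, the assumption that $\varphi$ is bounded and bounded away from zero near $c_0$ is what makes every term here meaningful: it lets us identify the kernels on $\Omega$ with the corresponding kernels on $\Omega \setminus \{c_0\}$, and it is implicit (as already in Theorem~\ref{decomp:ZeroReducingTheorem}) that $K_\varphi(c_0, c_0) = \norm{K_\varphi(\cdot, c_0)}_\varphi^2 \neq 0$.

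Next I would use the conjugate symmetry of the weighted Bergman kernel, which follows from~\eqref{intro:bkdef}, namely $K_\varphi(c_1, c_0) = \overline{K_\varphi(c_0, c_1)}$; in particular $K_\varphi(c_1, c_0) = 0$ precisely when $K_\varphi(c_0, c_1) = 0$. Therefore the hypothesis --- that $K_\varphi(c_0, c_1) = 0$ or $K_\varphi(c_0, c_2) = 0$ --- forces the product $K_\varphi(c_1, c_0)\,K_\varphi(c_0, c_2)$ to vanish, so the second summand above is identically zero and we are left with
\[
K_{|z-c_0|^2\varphi}(c_1, c_2) = \frac{K_\varphi(c_1, c_2)}{(c_1 - c_0)(\overline{c_2} - \overline{c_0})}.
\]

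Finally, since $c_0$, $c_1$, $c_2$ are distinct, the scalar $(c_1 - c_0)(\overline{c_2} - \overline{c_0})$ is nonzero, so the left-hand side vanishes if and only if $K_\varphi(c_1, c_2)$ does --- which is exactly the claimed equivalence. I do not anticipate any genuine obstacle: the argument is a direct substitution into Theorem~\ref{decomp:ZeroReducingTheorem} in the spirit of the proofs of Theorems~\ref{zeros:ZerosTheoremA} and~\ref{zeros:ZerosTheoremB}. The only points deserving a moment's attention are the appeal to conjugate symmetry of the kernel (needed to handle the case $K_\varphi(c_0, c_1) = 0$, since the formula displays $K_\varphi(c_1, c_0)$ instead) and the observation that the apparent singularities of~\eqref{decomp:ZeroReducingFormula} at $z = c_0$ or $w = c_0$ are irrelevant here because the evaluation points $c_1, c_2$ are both different from $c_0$.
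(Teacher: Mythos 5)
Your proposal is correct and is essentially identical to the paper's own proof: both substitute $c=c_0$, $z=c_1$, $w=c_2$ into Theorem~\ref{decomp:ZeroReducingTheorem}, observe that the hypothesis annihilates the second term (via conjugate symmetry of the kernel), and conclude from the nonvanishing of the scalar factor $(c_1-c_0)(\overline{c_2}-\overline{c_0})$. Your added remarks on conjugate symmetry and on $K_\varphi(c_0,c_0)\neq 0$ are correct fillings-in of details the paper leaves implicit.
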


\begin{proof}
By Theorem~\ref{decomp:ZeroReducingTheorem}, 
\begin{align*}
K_{|z-c_0|^2\varphi}(c_1, c_2) &= \frac{K_\varphi(c_1,
c_2)}{(c_1-c_0)(\overline{c_2}-\overline{c_0})} -
\frac{K_\varphi(c_1,c_0)K_\varphi(c_0,c_2)}
{(c_1-c_0)(\overline{c_2}-\overline{c_0})K_\varphi(c_0,c_0)}
\\ &= \frac{1}{(c_1-c_0)(\overline{c_2}-\overline{c_0})}\cdot K_\varphi(c_1,
c_2),
\end{align*}
from which the theorem is evident.
\end{proof}

\begin{theorem}\label{zeros:ZerosTheoremD}
Let $\Omega$ be a domain in $\C$, and let $\varphi$ be a weight on $\Omega$.
Suppose that for some $c_0\in\partial\Omega$ and some sequence
$\{c_j\}_{j=1}^\infty$ in $\Omega$ converging to $c_0$, we have $\frac{K_\varphi(z,\,
c_j)}{K_\varphi(c_j,\, c_j)}\to 0$ as $j\to\infty$ for all fixed $z\in\Omega$.
Suppose also that there exist $z_0, w_0 \in \Omega$ such that $K_\varphi(z_0,
w_0)=0$ and that $K_\varphi(z, c_j)$ is bounded away from $0$ when $j$ is large
enough and $z$ is in a compact subset of $\Omega$. Then for sufficiently
large $j$ (i.e., for $c_j$ sufficiently close to $c_0\in\partial \Omega$), there
exists a $z_1=z_1(c_j) \in \Omega$ near $z_0$ such that
$K_{|z-c_j|^2\varphi}(z_1, w_0)=0$.
\end{theorem}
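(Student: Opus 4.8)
The plan is to reduce the statement, via Theorem~\ref{decomp:ZeroReducingTheorem}, to the problem of locating a zero of an explicit holomorphic function which converges nicely as $j\to\infty$, and then to finish with Hurwitz's theorem. First I would note that since $c_j\to c_0\in\partial\Omega$, for all sufficiently large $j$ the point $c_j$ is different from $w_0$ and stays outside a fixed neighborhood of $z_0$. For such $j$, Theorem~\ref{decomp:ZeroReducingTheorem} applied at the center $c=c_j$ gives
\[
K_{|z-c_j|^2\varphi}(z,w_0)=\frac{1}{(z-c_j)(\overline{w_0}-\overline{c_j})}\,F_j(z),\qquad F_j(z):=K_\varphi(z,w_0)-\frac{K_\varphi(z,c_j)\,K_\varphi(c_j,w_0)}{K_\varphi(c_j,c_j)} .
\]
Because the prefactor is finite and nonvanishing for $z$ near $z_0$, it then suffices to produce, for all large $j$, a zero of $F_j$ near $z_0$.

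The heart of the argument will be to show that $F_j\to K_\varphi(\cdot,w_0)$ uniformly on compact subsets of $\Omega$; equivalently, that the correction term $G_j(z):=K_\varphi(z,c_j)K_\varphi(c_j,w_0)/K_\varphi(c_j,c_j)$ tends to $0$ locally uniformly. The Cauchy--Schwarz inequality for reproducing kernels gives $|G_j(z)|\le\sqrt{K_\varphi(z,z)\,K_\varphi(w_0,w_0)}$, so $\{G_j\}$ is locally bounded, hence a normal family of holomorphic functions, and it is enough to show $G_j\to0$ pointwise. I would extract the needed lower bound from the non-degeneracy hypothesis: applied at $z_0$ it yields $|K_\varphi(z_0,c_j)|^2\le K_\varphi(z_0,z_0)K_\varphi(c_j,c_j)$ with the left-hand side bounded below, so $\liminf_j K_\varphi(c_j,c_j)>0$, which keeps the normalizing denominator $K_\varphi(c_j,c_j)$ from degenerating. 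Writing $G_j(z)=\dfrac{K_\varphi(z,c_j)}{K_\varphi(c_j,c_j)}\,\overline{K_\varphi(w_0,c_j)}$ and combining this lower bound with the decay hypothesis at the fixed points $z$ and $w_0$ should give $G_j(z)\to0$. I expect this verification to be the main obstacle: a priori $G_j$ is merely bounded --- it is the rank-one quantity $k_j(z)\overline{k_j(w_0)}$ built from the normalized reproducing kernels $k_j:=K_\varphi(\cdot,c_j)/\sqrt{K_\varphi(c_j,c_j)}$, each factor of which is only known to be bounded --- so the actual decay has to be squeezed out of the hypothesis on $K_\varphi(z,c_j)/K_\varphi(c_j,c_j)$ together with the lower bound just mentioned.

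Granting the locally uniform convergence, I would finish with Hurwitz's theorem. First, $K_\varphi(\cdot,w_0)\not\equiv0$, for $K_\varphi(w_0,c_j)\neq0$ when $j$ is large forces $A^2_\varphi(\Omega)$ to contain a function nonzero at $w_0$, whence $K_\varphi(w_0,w_0)=\norm{K_\varphi(\cdot,w_0)}_\varphi^2>0$. Thus $z_0$ is an isolated zero of $K_\varphi(\cdot,w_0)$, and I would fix $r>0$ with $\overline{D(z_0,r)}\subset\Omega$ and $K_\varphi(\cdot,w_0)$ nonvanishing on $\partial D(z_0,r)$. Applying Hurwitz's theorem to $F_j\to K_\varphi(\cdot,w_0)$ on $\overline{D(z_0,r)}$ then produces, for all large $j$, a zero $z_1=z_1(c_j)\in D(z_0,r)$ of $F_j$; since $z_1\neq c_j$ and $w_0\neq c_j$ for such $j$, the identity of the first paragraph gives $K_{|z-c_j|^2\varphi}(z_1,w_0)=0$ with $|z_1-z_0|<r$. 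As $r$ may be chosen as small as desired (with $j$ taken correspondingly large), $z_1(c_j)$ is forced arbitrarily close to $z_0$, which is the assertion.
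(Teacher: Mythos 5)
Your reduction and your endgame coincide with the paper's proof up to cosmetic differences: the paper runs Rouch\'e's theorem on $g_{c_j,w_0}(z)-g_{c_j,w_0}(c_j)$ where $g_{c_j,w_0}=K_\varphi(\cdot,w_0)/K_\varphi(\cdot,c_j)$, and multiplying that function by the factor $K_\varphi(z,c_j)$ (nonvanishing near $z_0$ by hypothesis) produces exactly your $F_j$. The step you single out as the main obstacle is indeed the crux, and it is precisely the paper's condition (6): what is needed is $|G_j(z)|<|K_\varphi(z,w_0)|$ on a circle $\partial D(z_0,r)$ for large $j$ (full locally uniform convergence $G_j\to0$ is more than necessary, though it would of course suffice).

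The gap is that your proposed route to $G_j\to 0$ does not close under the stated hypotheses. First note the hypotheses force $K_\varphi(c_j,c_j)\to\infty$: you have $|K_\varphi(z_0,c_j)|\ge\delta>0$ while $|K_\varphi(z_0,c_j)|/K_\varphi(c_j,c_j)\to0$. Consequently the decay hypothesis at $w_0$ only gives $K_\varphi(c_j,w_0)=o\bigl(K_\varphi(c_j,c_j)\bigr)$, and Cauchy--Schwarz only gives $|K_\varphi(c_j,w_0)|=O\bigl(K_\varphi(c_j,c_j)^{1/2}\bigr)$; neither yields boundedness of $K_\varphi(c_j,w_0)$, so ``first factor tends to zero times bounded second factor'' is not available. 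Equivalently, in $G_j(z)=k_j(z)\overline{k_j(w_0)}$ each normalized factor is bounded by Cauchy--Schwarz but the hypothesis $K_\varphi(z,c_j)/K_\varphi(c_j,c_j)\to0$ is too weak to force either factor to zero --- once $K_\varphi(c_j,c_j)\to\infty$, that hypothesis holds automatically and carries no further information. The decay that does make your argument (and the paper's) run cleanly is the $\sqrt{K_\varphi(c_j,c_j)}$-normalized condition $K_\varphi(z,c_j)/\sqrt{K_\varphi(c_j,c_j)}\to0$ appearing in Theorems~\ref{zeros:ZerosTheoremF} and~\ref{zeros:outercone}, under which both $k_j(z)$ and $k_j(w_0)$ tend to zero and hence $G_j\to0$ locally uniformly. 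To be fair, the paper's own proof is no more explicit at this point: it simply posits a $j_1$ satisfying $\alpha(c_j)<\inf\{|g_{c_{j_1},w_0}(z)|:z\in\partial B(z_0,1/j_0)\}$, which is the same inequality in disguise and is circular as stated (the right-hand side depends on $j_1$ and could a priori decay as fast as $\alpha$). So you have correctly isolated the one genuinely delicate estimate; you have not supplied it, and the sketch you give for it would not survive being written out.
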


\begin{proof}
Define the following for all $\zeta, \omega, z\in\Omega$ and $\varepsilon>0$:
\begin{align*}
g_{\zeta, \omega}(z)&:=\frac{K_\varphi(z, \omega)}{K_\varphi(z, \zeta)};\\
\alpha(\zeta)&:=|g_{\zeta, w_0}(\zeta)| 
= \left| \frac{K_\varphi(\zeta, w_0)}{K_\varphi(\zeta, \zeta)} \right|;\mbox{
and}\\
B(z, \varepsilon)&:= \{w\in\Omega \mid |z-w| < \varepsilon \} \;
\mbox{ (the usual open }\varepsilon\mbox{-ball about
}z\mbox{).}
\end{align*}
Observe that by hypothesis, $\alpha(c_j)\to 0$ as $j\to\infty$. 
Let $d:=\frac{1}{2}\dist(z_0, \partial\Omega)$. Choose a $j_0\in\N$ so that the
following hold:
\begin{enumerate}
  \item\label{zeros:jlargeenough} $\frac{1}{j_0} < d$, and
  \item $|c_j - c_0| < \frac{1}{j_0}$ for all $j>j_0$.
\suspend{enumerate}
By~\eqref{zeros:jlargeenough} and the definition of $d$, 
\resume{enumerate}
  \item the closed ball $\overline{B\left(z_0, \frac{1}{j_0}\right)}$ is contained in
  $\Omega$.
\suspend{enumerate}
By hypothesis, for $j$ large enough, $K_\varphi(z, c_j)$ is bounded away from
zero for $z\in \overline{B\left(z_0, \frac{1}{j_0}\right)}$. Thus for $j$ large
enough, the zeros of $g_{c_j, w_0}(z):= \frac{K_\varphi(z,\, w_0)}{K_\varphi(z,\, c_j)}$
correspond to the zeros of $K_\varphi(z, w_0)$ on $\overline{B\left(z_0,
\frac{1}{j_0}\right)}$. So by possibly increasing $j_0$, we can choose $j_0$
large enough so that we also have 
\resume{enumerate}
  \item $\overline{B\left(z_0, \frac{1}{j_0}\right)}$ contains a single zero of $g_{c_j,
  w_0}(z)$ when $j>j_0$, namely $z_0$.
\suspend{enumerate}
Now choose $j_1 \geq j_0$ such that
\resume{enumerate}
  \item $\alpha(c_j) < \frac{1}{j_0}$ for all $j\geq j_1$, and
  \item $\alpha(c_j) < \inf \left\{ |g_{c_{j_1}, w_0}(z)| \mid z\in \partial
  B\left(z_0, \frac{1}{j_0}\right) \right\}$ for all $j \geq j_1$.
\end{enumerate}

Now we argue that $C_0:=g_{c_{j_1}, w_0}(\partial B\left(z_0,
\frac{1}{j_0}\right))$ is a closed curve about the origin and the point
$g_{c_{j_1}, w_0}(c_{j_1})$. Since $z_0$ is a zero of the holomorphic function
$g_{c_{j_1}, w_0}(z)$ and $\partial B\left(z_0, \frac{1}{j_0}\right)$ is a
closed curve about $z_0$, it follows from the argument principle of the
elementary theory of holomorphic functions that $C_0$ is a closed curve about
the origin. Moreover, $\alpha(c_{j_1}) < \inf \left\{ |g_{c_{j_1}, w_0}(z)|
\mid z\in \partial B\left(z_0, \frac{1}{j_0}\right) \right\}$ by (6), and so
$C_0$ also encloses a region containing $g_{c_{j_1}, w_0}(c_{j_1})$, that is,
$|g_{c_{j_1}, w_0}(c_{j_1})| < |g_{c_{j_1}, w_0}(z)|$ on $\partial B\left(z_0,
\frac{1}{j_0}\right)$. By Rouch\'e's
Theorem~\cite[p.~110]{boas_invitation_2010}, it follows that the
function $g_{c_{j_1}, w_0}(z)-g_{c_{j_1}, w_0}(c_{j_1})$ has a zero in
$B\left(z_0, \frac{1}{j_0}\right)$. Hence for some $z_1\in B\left(z_0, \frac{1}{j_0}\right)$,
we have $g_{c_{j_1}, w_0}(z_1) = g_{c_{j_1}, w_0}(c_{j_1})$, which is equivalent
to $\frac{K_\varphi(z_1,\, w_0)}{K_\varphi(z_1,\, c_{j_1})}
=\frac{K_\varphi(c_{j_1},\, w_0)}{K_\varphi(c_{j_1},\, c_{j_1})}$.
Since both $|z_0-z_1|<d$ and $|c_0-c_{j_1}| < d$, it must be that $z_1\neq
c_{j_1}$. Therefore $K_{|z-c_{j_1}|^2\varphi}(z_1, w_0)=0$.
\end{proof}

When $c\not\in\Omega$, then $K_{|z-c|^2\varphi}(z, w)=\frac{K_\varphi(z,\,
w)}{(z-c)(\overline{w}-\overline{c})}$ by Theorem~\ref{prelim:gTheorem}, so the
zero set of $K_{|z-c|^2\varphi}(z, w)$ corresponds to the zero set of
$K_\varphi(z, w)$ in that case. An interpretation of
Theorem~\ref{zeros:ZerosTheoremD} is that for $c\in\Omega$ as $c$ approaches
the boundary of $\Omega$, the zero set of $K_{|z-c|^2\varphi}(z, w)$ approaches
the zero set of $K_{\varphi}(z, w)$. The following corollary to
Theorem~\ref{zeros:ZerosTheoremB} does not assume that $c$ is near the boundary
of $\Omega$, though unlike in Theorem~\ref{zeros:ZerosTheoremD} we assume $c$
is adapted to a zero of the kernel.

\begin{corollary}[Corollary to Theorem~\ref{zeros:ZerosTheoremB}]\label{zeros:CorollaryToTheoremB}
Let $\Omega$ be a domain in $\C$, and let $\varphi$ be a weight on $\Omega$.
Suppose $c, w_0 \in \Omega$ such that $K_\varphi(z, w_0)$ has a zero of order $m>1$ at
$z=c$. Then there exist $z_1, z_2, \dotsc, z_{m-1}, w_1 \in \Omega$ with the
$z_j$ near $z_0$ and $w_1$ near $w_0$ such that $K_{|z-c|^2\varphi}(z_j, w_1)=0$
for $j=1,\dotsc, m-1$.
\end{corollary}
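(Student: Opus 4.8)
The plan is to show that $z\mapsto K_{|z-c|^{2}\varphi}(z,w_{0})$ has a zero of order exactly $m-1$ at $z=c$, and then to perturb $w_{0}$ slightly so that this zero breaks up into $m-1$ distinct zeros, all still near $c$.

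First I would note that $K_{\varphi}(c,w_{0})=0$ (it is the value at $z=c$ of a function that vanishes there) and that $w_{0}\neq c$ (since $K_{\varphi}(c,c)\neq 0$). Substituting $w=w_{0}$ into Theorem~\ref{decomp:ZeroReducingTheorem} and using $K_{\varphi}(c,w_{0})=0$ to kill the second term gives
\[
K_{|z-c|^{2}\varphi}(z,w_{0})=\frac{K_{\varphi}(z,w_{0})}{(z-c)(\overline{w_{0}}-\overline{c})},
\]
which is, up to interchanging the two variables, exactly the computation in the proof of Theorem~\ref{zeros:ZerosTheoremB}. Since $K_{\varphi}(z,w_{0})$ has a zero of order $m$ at $z=c$ while $\overline{w_{0}}-\overline{c}\neq 0$, the left-hand side has a zero of order exactly $m-1\geq 1$ at $z=c$; in particular it is not identically $0$.

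Next I would fix $\rho>0$ so small that $\overline{B(c,\rho)}\subset\Omega$ and that $z\mapsto K_{|z-c|^{2}\varphi}(z,w_{0})$ has $c$ as its only zero in $\overline{B(c,\rho)}$; then $|K_{|z-c|^{2}\varphi}(\cdot,w_{0})|\geq\delta>0$ on $\partial B(c,\rho)$. Since the Bergman kernel $K_{|z-c|^{2}\varphi}(z,w)$ is jointly continuous in $(z,w)$ (it is the locally uniformly convergent sum $\sum_{j}\phi_{j}(z)\overline{\phi_{j}(w)}$ over an orthonormal basis $\{\phi_{j}\}$ of $A^{2}_{|z-c|^{2}\varphi}(\Omega)$), we get $\sup_{z\in\partial B(c,\rho)}\bigl|K_{|z-c|^{2}\varphi}(z,w)-K_{|z-c|^{2}\varphi}(z,w_{0})\bigr|\to 0$ as $w\to w_{0}$. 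By Rouch\'e's Theorem, for every $w_{1}$ sufficiently close to $w_{0}$ the function $z\mapsto K_{|z-c|^{2}\varphi}(z,w_{1})$ has exactly $m-1$ zeros in $B(c,\rho)$, counted with multiplicity, all near $c$. If one only needs $z_{1},\dots,z_{m-1}$ listed with multiplicity, this already proves the corollary (e.g.\ with $w_{1}=w_{0}$ and $z_{1}=\dots=z_{m-1}=c$).

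The substantive point — and where I expect the real difficulty to lie — is to choose $w_{1}$ so that these $m-1$ zeros are \emph{distinct}. For $w$ near $w_{0}$ the power sums, hence the elementary symmetric functions, of the $m-1$ zeros of $z\mapsto K_{|z-c|^{2}\varphi}(z,w)$ in $B(c,\rho)$ are given by contour integrals over $\partial B(c,\rho)$ built from $K_{|z-c|^{2}\varphi}(z,w)$ and $\partial_{z}K_{|z-c|^{2}\varphi}(z,w)$; since these functions are anti-holomorphic in $w$, so is the discriminant $\Delta(w)$ of the monic degree-$(m-1)$ polynomial in $z$ whose roots are those zeros, and $\Delta(w_{0})=0$ (at $w_{0}$ the zeros all coincide at $c$). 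The $m-1$ zeros are distinct precisely when $\Delta(w)\neq 0$, so it suffices to show $\Delta\not\equiv 0$ in a neighborhood of $w_{0}$: then its zeros are isolated and any nearby $w_{1}$ with $\Delta(w_{1})\neq 0$ completes the proof. Equivalently, one must rule out a repeated irreducible factor in the germ of $K_{|z-c|^{2}\varphi}$ at $(c,w_{0})$ (regarded, after taking $\overline{w}$ as a holomorphic coordinate, as a germ of a holomorphic function of two variables). I would attack this via the positive definiteness of the reproducing kernel $K_{|z-c|^{2}\varphi}$ — conveniently, $(z-c)(\overline{w}-\overline{c})\,K_{|z-c|^{2}\varphi}(z,w)$ equals $K_{\varphi}(z,w)-K_{\varphi}(z,c)K_{\varphi}(c,w)/K_{\varphi}(c,c)$, the reproducing kernel of the subspace $\{f\in A^{2}_{\varphi}(\Omega):f(c)=0\}$ — or else by a direct analysis of $\Delta$ from the formula displayed above.
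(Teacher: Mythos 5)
Your first three paragraphs reproduce the paper's argument exactly: the paper obtains the order-$(m-1)$ zero of $z\mapsto K_{|z-c|^2\varphi}(z,w_0)$ at $z=c$ from Theorem~\ref{zeros:ZerosTheoremB} (your substitution into Theorem~\ref{decomp:ZeroReducingTheorem} is the same computation) and then simply says ``apply Hurwitz's Theorem,'' which is your Rouch\'e step. The distinctness concern in your final paragraph is legitimate but reflects an ambiguity in the statement itself rather than a gap in your argument relative to the paper's: the corollary never asserts the $z_j$ are distinct (and ``near $z_0$'' should read ``near $c$''), and the paper's one-line proof likewise produces only the $m-1$ Hurwitz zeros counted with multiplicity, so under that reading your proof is complete and coincides with the paper's.
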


\begin{proof}
Apply Hurwitz's Theorem to the conclusion of
Theorem~\ref{zeros:ZerosTheoremB}.
\end{proof}
%\pagebreak
\begin{theorem}\label{zeros:ZerosTheoremF}
\hspace{0.2cm}
\begin{enumerate}
  \item[A.] Suppose $\Omega\subset\C$ is a domain, and $\varphi$ a weight, and
  $\{c_j\}_{j=1}^\infty$ is a sequence in $\Omega$ converging to a point
  $c_0\in \partial \Omega$ such that for fixed $z$, $\frac{K_\varphi(z,
  c_j)}{\sqrt{K_\varphi(c_j, c_j)}} \to 0$ as $j\to \infty$. Suppose
  also that $K_{|z-c|^2\varphi}(z_0, w_0)=0$ for all $c\in\Omega$. Then either
	\begin{enumerate}
	  \item both $K_\varphi(z_0, w)\equiv 0$ and $K_{|z-c|^2\varphi}(z_0, w)\equiv
	  0$ as functions of $w$ for all $c$; or
	  \item both $K_\varphi(z, w_0)\equiv 0$ and $K_{|z-c|^2\varphi}(z, w_0)\equiv
	  0$ as functions of $z$ for all $c$.
	\end{enumerate}
  \item[B.] For any domain $\Omega$ and weight $\varphi$, if $K_\varphi(z,
  w_0)\equiv 0$ as a function of $z$, then for all $c\in\C$,
  $K_{|z-c|^2\varphi}(z, w_0)\equiv 0$ as well.
\end{enumerate}
\end{theorem}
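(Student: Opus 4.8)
The plan is to establish Part B first and then derive Part A from it and Theorem~\ref{decomp:ZeroReducingTheorem}. For Part B, the key observation is that multiplication by $(z-c)$ is an isometry of $A^2_{|z-c|^2\varphi}(\Omega)$ into $A^2_\varphi(\Omega)$: if $f\in A^2_{|z-c|^2\varphi}(\Omega)$ then $g(z):=(z-c)f(z)$ is holomorphic with $\norm{g}_\varphi=\norm{f}_{|z-c|^2\varphi}<\infty$. The hypothesis $K_\varphi(z,w_0)\equiv 0$ (in $z$) says, by the reproducing property, that $g(w_0)=\ip{g}{K_\varphi(\cdot,w_0)}_\varphi=0$ for every $g\in A^2_\varphi(\Omega)$; applied to $g=(z-c)f$ this gives $(w_0-c)f(w_0)=0$, hence $f(w_0)=0$ whenever $c\neq w_0$. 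Thus every element of $A^2_{|z-c|^2\varphi}(\Omega)$ vanishes at $w_0$, so $K_{|z-c|^2\varphi}(w_0,w_0)=\norm{K_{|z-c|^2\varphi}(\cdot,w_0)}^2_{|z-c|^2\varphi}=0$, and the Cauchy--Schwarz estimate $|K_{|z-c|^2\varphi}(z,w_0)|^2\le K_{|z-c|^2\varphi}(z,z)K_{|z-c|^2\varphi}(w_0,w_0)$ forces $K_{|z-c|^2\varphi}(z,w_0)\equiv 0$.

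For Part A, I would first pin down the number $K_\varphi(z_0,w_0)$. Evaluating~\eqref{decomp:ZeroReducingFormula} at $(z_0,w_0)$ with $c=c_j$ --- legitimate once $j$ is large enough that $c_j\neq z_0,w_0$ --- and using $K_{|z-c_j|^2\varphi}(z_0,w_0)=0$, the $(z_0-c_j)(\overline{w_0}-\overline{c_j})$ denominators cancel and one is left with
\[
K_\varphi(z_0,w_0)=\frac{K_\varphi(z_0,c_j)}{\sqrt{K_\varphi(c_j,c_j)}}\cdot\frac{K_\varphi(c_j,w_0)}{\sqrt{K_\varphi(c_j,c_j)}}.
\]
The first factor tends to $0$ by hypothesis, while the second has modulus at most $\sqrt{K_\varphi(w_0,w_0)}$ by Cauchy--Schwarz, so letting $j\to\infty$ gives $K_\varphi(z_0,w_0)=0$. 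Feeding this back into~\eqref{decomp:ZeroReducingFormula} at $(z_0,w_0)$ with an \emph{arbitrary} $c\in\Omega$ annihilates the first term and leaves $K_\varphi(z_0,c)\,K_\varphi(c,w_0)=0$ for every such $c$.

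Then I would promote this pointwise identity to an identical vanishing. Both $c\mapsto K_\varphi(c,z_0)$ and $c\mapsto K_\varphi(c,w_0)$ are holomorphic on $\Omega$, and $\overline{K_\varphi(c,z_0)}$ vanishes precisely where $K_\varphi(c,z_0)$ does; so if $K_\varphi(\cdot,z_0)\not\equiv 0$ its zeros form a discrete set, off which $K_\varphi(c,w_0)=0$, whence $K_\varphi(\cdot,w_0)\equiv 0$ by the identity theorem. Therefore either $K_\varphi(z,w_0)\equiv 0$ (in $z$) or $K_\varphi(z,z_0)\equiv 0$ (in $z$). In the first case Part B yields $K_{|z-c|^2\varphi}(z,w_0)\equiv 0$ for all $c$, which is alternative~(b). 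In the second case, the Hermitian symmetry $\overline{K_\varphi(z,w)}=K_\varphi(w,z)$ turns $K_\varphi(z,z_0)\equiv 0$ into $K_\varphi(z_0,w)\equiv 0$ (in $w$), and applying Part B with base point $z_0$ and then conjugating gives $K_{|z-c|^2\varphi}(z_0,w)\equiv 0$ (in $w$) for all $c$, which is alternative~(a).

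The work that needs the most care is organizational rather than deep: keeping track of which kernel slice is holomorphic and which conjugate-holomorphic when moving the vanishing between the $z$- and $w$-variables, and invoking Theorem~\ref{decomp:ZeroReducingTheorem} only at points where $\varphi$ is bounded (the remaining $c$ being recovered from continuity of $\overline{K_\varphi(c,z_0)}K_\varphi(c,w_0)$). The one genuinely delicate spot is the value $c=w_0$ excluded in the argument for Part B: there $|z-w_0|^2\varphi$ can be integrable near $w_0$ even though $\varphi$ is not, so $K_{|z-w_0|^2\varphi}(\cdot,w_0)$ need not vanish, and one should either state Part B for $c\neq w_0$ or impose that $\varphi$ be locally bounded near $w_0$.
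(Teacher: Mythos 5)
Your Part A is essentially the paper's argument: evaluate the decomposition formula of Theorem~\ref{decomp:ZeroReducingTheorem} at $(z_0,w_0)$ with $c=c_j$, let $j\to\infty$ to conclude $K_\varphi(z_0,w_0)=0$, deduce $K_\varphi(z_0,c)\,K_\varphi(c,w_0)=0$ for all $c$, and separate the two factors by the identity theorem; you are in fact more careful than the paper at two points (the Cauchy--Schwarz bound on the factor $K_\varphi(c_j,w_0)/\sqrt{K_\varphi(c_j,c_j)}$, and the holomorphy/discrete-zero-set argument in place of the paper's bare ``accumulation point'' assertion). Part B is where you genuinely diverge: the paper omits its proof, remarking only that it follows from Theorem~\ref{zeros:ZerosTheoremA} and continuity --- a route that tacitly needs $\varphi$ bounded and bounded away from zero near $c$ --- whereas your argument (multiplication by $z-c$ embeds $A^2_{|z-c|^2\varphi}(\Omega)$ isometrically into $A^2_\varphi(\Omega)$, so every element of the former vanishes at $w_0$, hence $K_{|z-c|^2\varphi}(w_0,w_0)=0$ and Cauchy--Schwarz forces the whole slice to vanish) is self-contained and works for arbitrary weights and arbitrary $c\neq w_0$, including $c\notin\Omega$. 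Your caveat at $c=w_0$ is real and is a defect of the statement rather than of your proof: for $\Omega=\D$, $\varphi(z)=|z|^{-2}$, $w_0=c=0$ one has $K_\varphi(z,0)\equiv 0$ (every element of $A^2_{|z|^{-2}}(\D)$ vanishes at the origin) while $K_{|z|^2\varphi}(z,0)=K^{\D}(z,0)=1/\pi\neq 0$, so the phrase ``for all $c\in\C$'' in Part B must exclude $c=w_0$ or else $\varphi$ must be assumed locally bounded there, exactly as you say.
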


\begin{rem}
Part (B) is similar to Theorem~\ref{zeros:ZerosTheoremA} and follows from
Theorem~\ref{zeros:ZerosTheoremA}, the hypothesis that $K_\varphi(z, w_0)\equiv
0$, and continuity.
\end{rem}

\begin{proof} We prove part (A) first. The proof of part (B) will be obvious
from the proof of part (A) and is omitted.

Let $c\in\Omega$. Assume first that $z_0\neq c$ and $w_0 \neq c$. Then by
Theorem~\ref{decomp:ZeroReducingTheorem} we must have
\begin{equation}
K_\varphi(z_0, w_0) = \frac{K_\varphi(z_0, c)K_\varphi(c, w_0)}{K_\varphi(c,
c)}. \label{zeros:thing}
\end{equation}

The right hand side of Equation~\ref{zeros:thing} vanishes when we replace $c$
with $c_j$ and let $j\to\infty$. Hence $K_\varphi(z_0, w_0)=0$, and therefore
either $K_\varphi(z_0, c)=0$ or $K_\varphi(c, w_0)=0$. One of these two
conditions must hold for a set of values of $c$ having an accumulation point,
hence for all $c$. Assume without loss of generality that $K_\varphi(c, w_0)=0$
for all $c$. Thus $K_\varphi(z, w_0)\equiv 0$ as a function of $z$. But then
\[
K_\varphi(z, w_0) = \frac{K_\varphi(z, c)K_\varphi(c, w_0)}{K_\varphi(c,
c)} = 0 \;\;\text{for all } z,
\]
and hence (by Theorem~\ref{decomp:ZeroReducingTheorem}) $K_{|z-c|^2\varphi}(z,
w_0)\equiv 0$ as a function of $z$.
\end{proof}

Since Theorems \ref{zeros:ZerosTheoremD} and \ref{zeros:ZerosTheoremF} have a
hypothesis requiring or implied by the condition 
\[
\frac{K_\varphi(z,
c)}{\sqrt{K_\varphi(c, c)}}\to 0 \text{ as }c\to c_0\in\partial \Omega,
\] 
we state sufficient conditions on a domain for this limit condition to be
satisfied. Below is ~\cite[Lemma~4.1 part~2]{fu_compactness_1998} which is
``implicit in work of Pflug (see~\cite[Section~7.6]{jarnicki_invariant_2005})
and Ohsawa~\cite{ohsawa_remark_1981} on the completeness of the Bergman metric''
according to Fu and Straube~\cite{fu_compactness_1998}.

\begin{theorem}\label{zeros:outercone}
Let $\Omega\subset\C^n$ be a bounded pseudoconvex domain.
Suppose $p_0$ is a point in the boundary of $\Omega$ satisfying the following
outer cone condition:
\begin{quote}
there exist $r\in(0,1]$, $a\geq 1$, and a sequence $\{w_\ell\}_{\ell=1}^\infty$
of points $w_\ell \not\in \Omega$ with $\lim_{\ell\to\infty} w_\ell = p_0$ and
$\Omega\cap B(w_\ell, r\norm{w_\ell - p_0}^a)=\emptyset$.
\end{quote}
Then for any sequence $\{p_j\}_{j=1}^\infty\subset\Omega$ converging to $p_0$,
\[
\lim_{j\to\infty} \frac{K^\Omega(z, p_j)}{\sqrt{K^\Omega(p_j, p_j)}} =0.
\]
\end{theorem}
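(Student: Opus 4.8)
The plan is to recast the statement inside the Bergman reproducing-kernel Hilbert space and reduce it to a localization property of the kernel at $p_0$, which is the one place the outer cone condition is genuinely needed. Write $k_p(z):=K^\Omega(z,p)/\sqrt{K^\Omega(p,p)}$ for the normalized kernel: $\norm{k_p}_{A^2(\Omega)}=1$, and $k_p$ is (up to a unimodular constant) the unique maximizer of $|f(p)|$ over the unit ball of $A^2(\Omega)$, with $|k_p(p)|=\sqrt{K^\Omega(p,p)}$. The conclusion to be proved is exactly that $k_{p_j}(z)\to 0$ for each fixed $z\in\Omega$. By the sub-mean-value inequality for the plurisubharmonic function $|k_{p_j}|^2$ we have $|k_{p_j}(z)|^2\le|B(z,\rho)|^{-1}\int_{B(z,\rho)}|k_{p_j}|^2\,dV$ whenever $\overline{B(z,\rho)}\subset\Omega$ (and $\norm{k_{p_j}}=1$ makes $\{k_{p_j}\}$ a normal family); hence it suffices to prove the stronger-looking statement that the $L^2$-mass of the normalized kernel escapes every fixed compact set, that is, $\int_L|k_{p_j}|^2\,dV\to 0$ for each compact $L\subset\Omega$.

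The next step reduces this mass-escape to a localization estimate for the Bergman kernel at $p_0$: for each fixed $\eta>0$,
\[
\frac{K^\Omega(p,p)}{K^{\Omega\cap B(p_0,\eta)}(p,p)}\longrightarrow 1\qquad\text{as }\Omega\ni p\to p_0 .
\]
Granting this, the mass-escape is immediate from domain monotonicity and the extremal inequality: since $\Omega\cap B(p_0,\eta)\subset\Omega$ we have $K^\Omega\le K^{\Omega\cap B(p_0,\eta)}$, and applying $|h(p_j)|^2\le K^{\Omega\cap B(p_0,\eta)}(p_j,p_j)\,\norm{h}^2_{L^2(\Omega\cap B(p_0,\eta))}$ to $h=k_{p_j}$ restricted to $\Omega\cap B(p_0,\eta)$ gives
\[
\norm{k_{p_j}}^2_{L^2(\Omega\cap B(p_0,\eta))}\ \ge\ \frac{K^\Omega(p_j,p_j)}{K^{\Omega\cap B(p_0,\eta)}(p_j,p_j)}\ \longrightarrow\ 1 ,
\]
so $\norm{k_{p_j}}^2_{L^2(\Omega\setminus B(p_0,\eta))}\to 0$; any fixed compact $L$ lies in $\Omega\setminus B(p_0,\eta)$ for $\eta$ small enough, and the reduction is complete.

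The remaining and main step is the localization estimate, and this is where pseudoconvexity and the outer cone enter, through H\"ormander's $L^2$-estimates for $\bar\partial$. The inequality $K^\Omega\le K^{\Omega\cap B(p_0,\eta)}$ is free. For the reverse, let $f$ be the extremal function for $\Omega\cap B(p_0,\eta)$ at $p$, normalized so $f(p)=1$ and $\norm{f}^2_{L^2(\Omega\cap B(p_0,\eta))}=1/K^{\Omega\cap B(p_0,\eta)}(p,p)$; cut it off with $\chi$ identically $1$ near $p_0$ and supported in $B(p_0,\eta)$, and solve $\bar\partial u=(\bar\partial\chi)f$ on the pseudoconvex domain $\Omega$ against a weight carrying a logarithmic pole at $p$, which forces $u(p)=0$. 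Then $F:=\chi f-u\in A^2(\Omega)$ with $F(p)=1$, so $K^\Omega(p,p)\ge 1/\norm{F}^2_{L^2(\Omega)}$, and the localization follows once $\norm{u}_{L^2(\Omega)}$ is shown to be a vanishing fraction of $\norm{f}_{L^2(\Omega\cap B(p_0,\eta))}$ as $p\to p_0$. H\"ormander's estimate controls $\norm{u}_{L^2}$ by the mass of $f$ on the fixed collar $\operatorname{supp}(\bar\partial\chi)$, so everything comes down to showing that the extremal function $f$ on $\Omega\cap B(p_0,\eta)$ concentrates its mass near $p$ (hence near $p_0$) in the limit; this concentration is precisely what the outer cone condition provides, since it forces $\Omega\cap B(p_0,\eta)$ to be thin at $p_0$ and endows $p_0$ with the Stein/hyperconvexity-type local structure under which the arguments of Pflug and Ohsawa go through. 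I expect this concentration and the attendant $\bar\partial$-error estimate, with its quantitative dependence on the cone exponents $r$ and $a$, to be the hard part; it simplifies substantially when $n=1$, where potential-theoretic estimates (Green's function and logarithmic capacity) can replace the $\bar\partial$-machinery.
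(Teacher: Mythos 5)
The paper does not prove this theorem: it is quoted verbatim as Lemma~4.1, part~2, of Fu and Straube~\cite{fu_compactness_1998}, so there is no in-paper proof to compare against and your proposal has to stand on its own as a proof of a known result. Your reduction is the standard one and is correct as far as it goes: passing to the normalized kernels $k_{p_j}$, using the sub-mean-value inequality to reduce pointwise (indeed locally uniform) decay to escape of $L^2$-mass from compacta, and then using monotonicity $K^\Omega \le K^{\Omega\cap B(p_0,\eta)}$ on the diagonal together with the extremal characterization to reduce everything to the localization statement $K^\Omega(p,p)/K^{\Omega\cap B(p_0,\eta)}(p,p)\to 1$. This is the right skeleton, and it matches the architecture of the Ohsawa/Pflug/Fu--Straube arguments the paper cites.

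The gap is that the one step carrying all the content is asserted rather than proved, and the sketch you give for it does not deliver what you need. The H\"ormander cutoff-and-solve argument with a logarithmic pole, run as you describe it (bounding $\norm{u}$ by the mass of the extremal function on the collar, which in turn is at most its full $L^2$ norm), yields only $K^{\Omega\cap B(p_0,\eta)}(p,p)\le C\,K^\Omega(p,p)$ with a constant $C>1$ depending on $\eta$ --- and with a mere constant your mass-escape step collapses: you get $\norm{k_{p_j}}^2_{L^2(\Omega\cap B)}\ge 1/C$ and hence only $\norm{k_{p_j}}^2_{L^2(\Omega\setminus B)}\le 1-1/C$, which does not tend to zero. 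Upgrading ``comparable'' to ``ratio tends to $1$'' requires knowing that $K^{\Omega\cap B(p_0,\eta)}(p,p)\to\infty$ as $p\to p_0$ (so that the additive, $O(1)$-sized $\bar\partial$-error is negligible against the main term), and \emph{that} is precisely where the outer cone condition does its work: $\Omega$ omits the balls $B(w_\ell, r\norm{w_\ell-p_0}^a)$, the restriction to a complex line through $w_\ell$ is a planar domain omitting a disk whose Bergman kernel blows up at the boundary (an explicit one-variable computation), and the Ohsawa--Takegoshi extension theorem (or a direct construction of functions with large value at $p$ and controlled norm) transfers the blow-up to $\Omega$. This is part~1 of the same Fu--Straube lemma, and none of it appears in your proposal; in its place is the statement that the cone condition ``endows $p_0$ with Stein/hyperconvexity-type local structure,'' which is not an argument. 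You candidly flag this as the part you ``expect'' to be hard, so the proposal should be read as a correct reduction plus an unproved key lemma, not as a proof.
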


The outer cone condition of Theorem~\ref{zeros:outercone} is satisfied when
$\Omega$ has $C^1$ boundary, for example. Pseudoconvexity is a central notion in
several complex variables which reduces to a triviality for domains of a single
complex dimension: every domain in the plane is
pseudoconvex~\cite{krantz_function_2001}. Because we wish to also have the
conclusion of the above theorem for certain weighted kernels, we show that the
property addressed by the theorem is preserved when the weight of a kernel is
multiplied by the modulus squared of a linear factor.

\begin{theorem}\label{zeros:outerconeweighted}
Suppose $\Omega\subset\C$ is a domain, $p_0\in\partial\Omega$, and
$\{p_j\}_{j=1}^\infty\subset\Omega$ is a sequence with $p_j\to p_0$ as
$j\to\infty$ such that $\frac{K_\varphi(z, p_j)}{\sqrt{K_\varphi(p_j, p_j)}}\to
0$ as $j\to\infty$ locally uniformly. Then for any $c\in\Omega$ with
$K_\varphi(c, c)\neq 0$, $\frac{K_{|z-c|^2\varphi}(z,
p_j)}{\sqrt{K_{|z-c|^2\varphi}(p_j, p_j)}}\to 0$ as $j\to\infty$ locally uniformly.
\end{theorem}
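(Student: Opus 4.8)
The plan is to use the explicit formula of Theorem~\ref{decomp:ZeroReducingTheorem} to reduce the weighted statement to the unweighted-type hypothesis we are given. Write $\psi := |z-c|^2\varphi$. By Theorem~\ref{decomp:ZeroReducingTheorem},
\[
K_\psi(z, p_j) = \frac{1}{(z-c)(\overline{p_j}-\overline{c})}\left( K_\varphi(z, p_j) - \frac{K_\varphi(z, c) K_\varphi(c, p_j)}{K_\varphi(c, c)} \right),
\]
and in particular, setting $z = p_j$,
\[
K_\psi(p_j, p_j) = \frac{1}{|p_j - c|^2}\left( K_\varphi(p_j, p_j) - \frac{|K_\varphi(p_j, c)|^2}{K_\varphi(c, c)} \right).
\]
The strategy is to form the quotient $K_\psi(z, p_j)/\sqrt{K_\psi(p_j, p_j)}$, cancel the common factor $1/(\overline{p_j}-\overline{c})$ (note $|\,\overline{p_j}-\overline{c}\,| = |p_j - c| \to |p_0 - c| > 0$, since $c \in \Omega$ while $p_0 \in \partial\Omega$, so this factor is harmless and bounded away from $0$ and $\infty$), and then divide numerator and denominator by $\sqrt{K_\varphi(p_j, p_j)}$ to express everything in terms of the quantities $K_\varphi(z, p_j)/\sqrt{K_\varphi(p_j, p_j)}$ and $K_\varphi(c, p_j)/\sqrt{K_\varphi(p_j, p_j)}$, both of which tend to $0$ locally uniformly by hypothesis.

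Carrying this out: the numerator, after dividing by $\sqrt{K_\varphi(p_j, p_j)}$, is
\[
\frac{1}{z-c}\left( \frac{K_\varphi(z, p_j)}{\sqrt{K_\varphi(p_j, p_j)}} - \frac{K_\varphi(z, c)}{K_\varphi(c, c)} \cdot \frac{K_\varphi(c, p_j)}{\sqrt{K_\varphi(p_j, p_j)}} \right),
\]
which tends to $0$ locally uniformly in $z$ on compact subsets of $\Omega \setminus \{c\}$ (the factor $K_\varphi(z,c)/K_\varphi(c,c)$ being a fixed holomorphic function, hence locally bounded). For the denominator, divide inside the square root by $K_\varphi(p_j, p_j)$:
\[
\sqrt{\frac{K_\psi(p_j, p_j)}{K_\varphi(p_j, p_j)}} = \frac{1}{|p_j - c|}\sqrt{1 - \frac{1}{K_\varphi(c,c)}\left| \frac{K_\varphi(p_j, c)}{\sqrt{K_\varphi(p_j, p_j)}} \right|^2} \;\longrightarrow\; \frac{1}{|p_0 - c|} \cdot 1,
\]
again using the hypothesis applied at the fixed point $z = c$. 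Assembling the pieces,
\[
\frac{K_\psi(z, p_j)}{\sqrt{K_\psi(p_j, p_j)}} = \frac{K_\psi(z, p_j)/\sqrt{K_\varphi(p_j, p_j)}}{\sqrt{K_\psi(p_j, p_j)/K_\varphi(p_j, p_j)}} \;\longrightarrow\; 0
\]
locally uniformly, the numerator going to $0$ and the denominator to a nonzero limit.

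The main obstacle is handling the apparent singularity at $z = c$ cleanly. The claimed local-uniform convergence is over compact subsets of $\Omega$, which may include $c$; but by the remark following Theorem~\ref{decomp:ZeroReducingTheorem} the left side $K_\psi(z, p_j)$ is holomorphic across $z = c$, so the convergence argument away from $c$ plus a normal-families or maximum-principle argument extends it across $c$. Concretely, I would either invoke the removability directly — the bracketed combination $K_\varphi(z, p_j) - K_\varphi(z,c)K_\varphi(c,p_j)/K_\varphi(c,c)$ vanishes at $z = c$, so it is divisible by $(z-c)$ with the quotient still controlled by the sup over a small circle around $c$ via the Cauchy estimates — or simply note that local uniform convergence on the punctured compact set together with local boundedness (from the $L^2$ bound $\|K_\psi(\cdot, p_j)\|_\psi^2 = K_\psi(p_j,p_j)$, suitably normalized) forces convergence on the full compact set by Montel's theorem. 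A secondary point worth checking is that $K_\varphi(c,c) \neq 0$ is exactly the stated hypothesis, so the divisions above are legitimate, and that $K_\varphi(p_j, p_j) \neq 0$ for large $j$ — which holds because $K_\varphi(p_j,p_j) = \|K_\varphi(\cdot,p_j)\|_\varphi^2 > 0$ whenever the Bergman space is nontrivial, and if it were trivial the whole statement is vacuous.
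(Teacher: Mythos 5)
Your proposal is correct and follows essentially the same route as the paper: substitute the formula of Theorem~\ref{decomp:ZeroReducingTheorem}, normalize numerator and denominator by $\sqrt{K_\varphi(p_j,p_j)}$, and observe that the hypothesis sends the numerator to $0$ while the denominator tends to the nonzero limit $K_\varphi(c,c)^{1/2}$. Your extra paragraph handling the removable singularity at $z=c$ (via the vanishing of the bracketed combination at $z=c$ together with the maximum principle or Cauchy estimates) addresses a point the paper's proof passes over silently, and is a worthwhile refinement rather than a deviation.
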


\begin{proof}
From Theorem~\ref{decomp:ZeroReducingTheorem} we get
\begin{align*}
&\frac{K_{|z-c|^2\varphi}(z,p_j)}{\sqrt{K_{|z-c|^2\varphi}(p_j,p_j)}}
=\frac{
	\frac{K_\varphi(z,\, p_j)K_\varphi(c,\, c) - K_\varphi(z,\, c)K_\varphi(c,\,
	p_j)}{ (z-c)(\overline{p_j}-\overline{c})K_\varphi(c,\,c)}
}{
	\left( \frac{K_\varphi(p_j,\, p_j)K_\varphi(c,\, c) - |K_\varphi(p_j,\, c)|^2}{
	|p_j-c|^2K_\varphi(c,\,c)} \right)^{1/2}
}\\\\
&\qquad =\frac{
	|p_j-c|^2K_\varphi(c, c)^{1/2}
}{
	(z-c)(\overline{p_j}-\overline{c})K_\varphi(c, c)
}
\cdot
\frac{
	K_\varphi(z, p_j)K_\varphi(c, c) - K_\varphi(z, c)K_\varphi(c, p_j)
}{
	\left( K_\varphi(p_j, p_j)K_\varphi(c, c) - |K_\varphi(p_j, c)|^2 \right)^{1/2}
}\\\\
&\qquad =\frac{
	(p_j - c)
}{
	(z-c)K_\varphi(c, c)^{1/2}
}
\cdot
\frac{
	\frac{K_\varphi(z,\, p_j)K_\varphi(c,\, c)}{K_\varphi(p_j,\, p_j)^{1/2}}
	-\frac{K_\varphi(z,\, c)K_\varphi(c,\, p_j)}{K_\varphi(p_j,\, p_j)^{1/2}}
}{
	\left( K_\varphi(c, c) - \frac{|K_\varphi(p_j,\, c)|^2}{K_\varphi(p_j,\, p_j)}
	\right)^{1/2} }.
\end{align*}
The first factor approaches a constant as $j\to\infty$. In the second factor,
every fraction in the numerator and the denominator approaches zero as
$j\to\infty$ locally uniformly by hypothesis, so the second factor approaches
zero as $j\to\infty$ locally uniformly. This proves the theorem.
\end{proof}

\section{Further questions}

Consider the (unweighted) kernel $K(z, w)$ for the unit disk $\D$. By summing an
appropriate orthonormal basis in Equation~(2), it can be shown that for any real
$\alpha$ greater than $-2$,
\begin{equation*} 
K_{|z|^\alpha}(z, w) = K(z, w) + \frac{\alpha}{2\pi (1-z\overline{w})} 
= \left(1+\frac{\alpha}{2} - \frac{\alpha}{2}z\overline{w}\right)K(z, w).
\end{equation*}
(The reader might verify that this formula agrees with
Theorem~\ref{decomp:ZeroReducingTheorem} when $\alpha = 2p$, $p\in\N$.) Now
let $c\in D$ and $p\in\N$. Using this formula, the classical change of variables
theorem for Bergman kernels, and Theorem~\ref{prelim:gTheorem}, one obtains
\begin{equation*}
\begin{split}
K_{|z-c|^{2p}}(z, w) &= \frac{K_{|\mu_c|^{2p}}(z,
w)}{(1-\overline{c}z)^p(1-c\overline{w})^p} \\ 
&= ((p+1)-p\mu_c(z)\overline{\mu_c(w)})\frac{K(z,
w)}{(1-\overline{c}z)^p(1-c\overline{w})^p}.
\end{split} 
\end{equation*} 
What is the formula if $p$ is allowed to be real? That is, what is the formula
for $K_{|z-c|^{\alpha}}(z, w)$, $\alpha\in\R$? In particular, what is the
formula when $\alpha = 1$?

Generalizing the previous question, is there a technique for computing
$K_{\varphi}^\Omega(z, w)$ explicitly in terms of $K^\Omega(z, w)$ in the case
that $\varphi$ is the modulus of a meromorphic function rather than the
\emph{square} of the modulus a meromorphic function? Is there such a technique
when $\varphi$ is harmonic?

%\clearpage
\bibliographystyle{amsplain}
\bibliography{bibliography}

\end{document}